\newtheorem{theorem}{Theorem}
\newtheorem{lemma}{Lemma}
\newtheorem{proposition}{Proposition}
\newtheorem{definition}{Definition}
\newtheorem{remark}{Remark}
\newtheorem{example}{Example}
\title{Entropy, pressure, ground states and calibrated sub-actions for linear dynamics}
\author{Artur O. Lopes\thanks{Institute of Mathematics and Statistics - Federal University of Rio Grande do Sul. Partially supported by CNPq.}\; and Victor Vargas\thanks{School of Mathematics - National University of Colombia. Supported by FFJC-MINCIENCIAS Process 80740-628-2020.}
}
\begin{document}

\maketitle

{\bf Abstract:}
Denote by $X$ a  Banach space and by $T : X \to X$ a bounded linear operator  with non-trivial kernel satisfying suitable conditions.  We consider the concepts of entropy - for $T$-invariant probability measures - and pressure for H\"older continuous potentials. We also prove the existence  of ground states (the limit when temperature goes to zero) associated with such class of potentials when the Banach space $X$ is equipped with a Schauder basis. We produce an example concerning  weighted shift operators defined on the Banach spaces $c_0(\mathbb{R})$ and $l^p(\mathbb{R})$, $1 \leq p < +\infty$,  where our results do apply. In addition, we prove the existence  of calibrated sub-actions when the potential satisfies certain regularity conditions using properties of  the so-called  Ma\~n\'e potential. We also exhibit examples of selection at zero temperature and explicit sub-actions in the class of H\"older continuous potentials.

\vspace{2mm}

{\footnotesize {\bf Keywords:} entropy, equilibrium states, ergodic optimization, ground states, invariant probabilities, linear dynamics, pressure, Ruelle operator, sub-actions, weighted shifts.}

\vspace*{2mm}

{\footnotesize {\bf Mathematics Subject Classification (2010):}   37A35, 37L40.}

\section{Introduction}

Ergodic Optimization is the branch of mathematics dedicated to the study of the properties of the set of invariant probability measures that maximize the value of the integral with respect to a fixed (at least continuous) potential $A$  (see for instance \cite{MR3114331}, \cite{MR3701349}, \cite{MR2191393} and \cite{J2}).  The underlying dynamics of the above-mentioned papers is given by the shift map,  which is the classical Ergodic Optimization framework.


Here we are interested in discrete-time Linear Dynamical Systems acting on a separable Banach space $X$. We point out that the dynamics of a bounded linear operator has some features which  are quite different from the dynamics of the  shift. For instance, if $v\in X$ is a  periodic point for $T:X \to X$, then, the collection of all vectors in the one-dimensional subspace generated by $v$ are also  periodic points. Nowadays  there exists  substantial work on the dynamics of bounded linear operators acting on Banach spaces (see for instance \cite{MR2533318, MR3759568, BeMe20, God, MR3255465, MR2919812}).

In  Statistical Mechanics the influence of the temperature $\text{Temp}>0$ is described by considering the potential $\frac{1}{\text{Temp}} \,A$. It is common to introduce the parameter $t= \frac{1}{\text{Temp}}$. The Gibbs state associated with the potential $t\,A$ will be denoted by $\mu_{t\,A}$. When the potential is of H\"older class several nice properties can be derived for  its corresponding Gibbs states (see for instance \cite{MR1085356}). In \cite{LMSV19} the authors show the existence of $T$-invariant probabilities with full support using the Gibbs state point of view.

Given a H\"older potential $A$ we call equilibrium state for $A$ an invariant one  that maximizes a variational principle of pressure (to be defined later). For the case when the dynamics is given by the shift several papers addressed the question of showing that  a Gibbs state for $A$ is an equilibrium state for $A$  as well (see for instance \cite{MR1085356}). In the first part  of our paper we will be interested in equilibrium states (they are invariant by the dynamics governed by a bounded surjective linear operator $T$ with finite dimensional non-trivial kernel). The second part of the paper  is dedicated to the topic of Ergodic Optimization.

Several results in Ergodic Optimization were developed  using  tools of  Thermodynamical Formalism for compact and Polish metric spaces (see for instance \cite{MR3114331, BGT, MR3701349, GT, MR2191393, MR2279266, MR2354972}).
This is so because maximizing probabilities for a potential $A$ appear in  a natural way as the limit of equilibrium states when the temperature $\text{Temp} = \frac{1}{t}$ goes to zero (which is the same to say  that $t \to +\infty$). An invariant probability obtained as an accumulation point  (as the limit of a subsequence $t_n \to +\infty$) is called a {\it ground state}. They are special in some sense because in addition, they maximize the entropy among all the maximizing probability measures of the potential $A$ (see \cite{MR3114331, MR1855838, MR3864383, MR2151222, MR2800665}).

  In the case of the uniqueness of such accumulation point, we say that there exists {\it selection of probability at zero temperature}. This is the case for example  when the maximizing probability is unique. One can show that for a generic H\"older potential, the maximizing probability is unique (see for instance \cite {MR1855838}).  A very important result in the area shows that generically in  the class of H\"older continuous potentials, the maximizing probability has support in a unique periodic orbit (see \cite{CoGro}).

In \cite{MR1958608}, \cite{MR2818689} and \cite{MR2176962}, it  was proved (without assuming uniqueness of the maximizing probability) the  existence of a unique accumulation point at zero temperature in the context of subshifts of finite type under the assumption that the potential depends only on finite coordinates. Se also  \cite{BGT} and \cite{GT} for another kind of examples where there exists selection at zero temperature.

In \cite{MR2864625} and \cite{van}, the authors present examples of a H\"older continuous potential where selection  at zero temperature does not occur.

For the  shift map in the non-compact setting, similar results were proved  showing the existence of ground states (the existence of  accumulation points at zero temperature).  Results in this direction were obtained for countable Markov shifts satisfying the so-called   BIP property, topologically transitive countable Markov shifts, and full shifts defined on the lattice $\mathbb{R}^{\mathbb{N}}$ (see for instance \cite{MR3864383, MR2151222, MR2800665, MR3377291, 1LoVa19, SV20}).

The study of sub-actions and calibrated sub-actions provides important tools in the study of maximizing measures associated with  H\"older continuous (or even continuous) potentials and its corresponding supports, This is so because a sub-action provides tools to identify the support of  maximizing probabilities (see \cite{MR3114331,MR1855838}).  Several works addressed this issue in the context of $X Y$ models, expanding maps of the circle, subshifts of finite type, and even in non-compact settings such as shifts defined on Polish spaces. A helpful  tool used to find sub-actions associated with a potential satisfying certain regularity is the so-called  Ma\~n\'e potential  (see for details \cite{MR2864625, ChFr19, MR1855838, MR3701349, GL1, MR2354972, MR3377291, LMST})

The dynamics of bounded linear operators on Banach spaces (see \cite{MR2533318, MR3759568, Gil, MR2919812}) present some special properties which are significantly different from the ones for the shift and continuous maps acting on Polish spaces. Questions  of topological nature as expansivity, shadowing, transitivity, and structural stability in this linear setting were addressed in \cite{MR3759568, BeMe20, Gil}. We refer the reader to the first part of \cite{LMSV19} for a short account of some basic definitions and results concerning the  theory of Linear Dynamics on  Banach spaces.

For the discrete-time dynamical action of a bounded linear operator $T:X \to X$ on a Banach space $X$, the paper \cite{MR3255465} present results about the existence of $T$-invariant probability measures with full support in the case that $X$ is reflexive and separable.  An extension of this result for a more general setting, including non-reflexive separable Banach spaces, was presented in \cite{LMSV19}. This was obtained via the classical tool in Thermodynamic Formalism known as Ruelle-Perron-Frobenius Theorem. It was assumed that the kernel of the bounded linear operator $L$ is nontrivial and of finite dimension. In order to define this operator, it was necessary in  \cite{LMSV19} to fix an {\em a priori} probability on the kernel of $T$  with some properties.

For most of the reasoning of the present paper. we consider similar assumptions as in \cite{LMSV19}. We introduce the concepts of entropy and pressure in the setting of Linear Dynamics on Banach spaces. Our first  goal is to show that for a given  potential $A: X \to \mathbb{R}$, satisfying some mild regularity  assumptions, the Gibbs state $\mu_A$ obtained in Theorem $1$ in \cite{LMSV19} is in fact  an equilibrium state associated with $A$ i.e. it is satisfied a variational principle of  pressure and the supremum is attained at $\mu_A$.

Our second goal is to present results on the topic of Ergodic Optimization. We prove that the accumulation points of the family of probability measures $(\mu_{tA})_{t > 1}$, when $t$ goes to $\infty$, are maximizing probability measures for $A$. We point out the existence of accumulation points is not a trivial matter. This will require assuming some properties for the potential $A$ in order to  be able to show that a certain sequence of probabilities is tight.  For some of our results, we will need some more strong assumptions on  the {\em a priori} probability measure than the ones considered in \cite{LMSV19}. Moreover, we show the existence of calibrated sub-actions when the potential is at least H\"older continuous. Our most important new assumption  is to suppose  that the Banach space $X$ has a Schauder basis which is a property satisfied by a wide class of Banach spaces, such as, $c_0(\mathbb{R})$, $l^p(\mathbb{R})$, $1 \leq p < +\infty$ and any  separable Hilbert space  $H$. Moreover, we present examples concerning frequently hypercyclic and Devaney chaotic operators $L : X \to X$, which are  defined on the Banach spaces $X = c_0(\mathbb{R})$ or $X = l^p(\mathbb{R})$, $1 \leq p < +\infty$. This class of operators, which were also considered in  \cite{LMSV19},  are given by the equation
$$
L((x_n)_{n \geq 1}) = (\alpha_n x_{n+1})_{n \geq 1} \;,
$$
where $(\alpha_n)_{n \geq 1}$ is a sequence of real numbers satisfying suitable conditions (see for instance \cite{MR2533318, MR3759568, BeMe20, MR2919812}). This type of operator is known in the classical mathematical literature as weighted shift.

At the end of the paper, we introduce the concept of Ma\~n\'e potential in the setting of Linear Dynamics. Taking advantage of  the Ma\~n\'e potential we prove the  existence of calibrated  sub-actions associated with H\"older continuous potentials, under the assumption that the set of maximizing measures is non-empty. Furthermore, we show that it is not necessary to assume the hypothesis of summable variations on the potential in order to guarantee the result. In the last section  we also present some examples where is possible to get an explicit expression for the calibrated sub-action and is guaranteed the selection at zero temperature.

The paper is organized as follows:

In section \ref{main-results-section} we introduce basic definitions, the notation that will be used throughout the paper and we state the main results to be obtained.

In section \ref{ground-states-section} we prove existence of equilibrium states and ground states in the matter of Linear Dynamics on Banach spaces. More precisely, in section \ref{variational-principle-section} appears the proof of Theorem \ref{variational-principle}  which guarantees that any Gibbs state is, in fact, an equilibrium state. In section \ref{zero-temperature-limit-section} appears the proof of Theorem \ref{limit-theorem}, which guarantees existence of ground states. In section \ref{maximizing-measures-section} we prove existence of maximizing measures via ground states and maximizing potentials in order to prove that the so-called Ma\~n\'e potential is well defined in our matter. Finally, in section \ref{example-section} we present particular cases of our two first main results of the paper in the setting of weighted shifts defined on $X = c_0(\mathbb{R})$ and $X = l^p(\mathbb{R})$, $1 \leq p < +\infty$.

In section \ref{sub-actions-section} we show existence of calibrated sub-actions using the Ma\~n\'e potential and some explicit examples about existence of sub-actions and selection at zero temperature. More precisely, in section \ref{sub-actions-subsection} appears the proof of Theorem \ref{sub-action-theorem} and in section \ref{examples} we present examples to illustrate the theory studied in the paper.

The proofs of some of our results (in the setting Linear Dynamics) are similar to the analogous ones for the shift, in that case, we will just briefly mention to the reader references for the proof. For some other results, the proofs are quite different and we provide full details in this case.

\section{Main results}
\label{main-results-section}

In this section we present  some basic definitions that are required for our reasoning  and we also state the main results of the paper. Consider a Banach space $X$ equipped with a norm $\| \cdot \|_X$. 
Hereafter,  we fix a linear operator $T:X \to X$ which is surjective (but not injective), and we assume that the Banach space $X$ can be decomposed as the direct sum
\[
X = \mathrm{Ker}(T) \oplus E \;,
\]
where $\mathrm{Ker}(T)$  denotes the {\em kernel} of the operator $T$. We also assume throughout the paper that $0 < \dim(\mathrm{Ker}(T)) < +\infty$.

Then, defining
\begin{equation}
\label{p-norm}
p(T) := \inf\{ \|T(x)\|_X : \|x\|_X = 1,\, x \in E \} \,,
\end{equation}
it follows from the open map theorem that $p(T) > 0$ and $p(T)\|x\|_X \leq \|T(x)\|_X$ for each $x \in E$. 

Furthermore, under the assumption that for each $n \in \mathbb{N}$ the Banach space $X$ admits a decomposition of the form
\begin{equation}
\label{n-sum-decomposition}
X = \mathrm{Ker}(T^n) \oplus E_n \;,
\end{equation}
where $T(E_{n+1}) \subset E_n$, it follows that $p(T^n) > 0$. Besides that, as a consequence of the sub-additivity of the sequence $(p(T^n))_{n \geq 1}$, it follows the existence of the limit $\lim_{n \to +\infty}(p(T^n))^{\frac{1}{n}}$  (in particular, the above conditions are satisfied in the case of weighted shifts, when $\alpha_n \in (c', c)$ for some $0 < c' < c < +\infty$).

It is known that the convergence of the series $\sum_{n = 1}^{+\infty}p(T^n)^{-\alpha}$ to a real number implies that the dynamical system $T : X \to X$ is Devaney chaotic and frequently hypercyclic  (see for details \cite{MR2533318,MR2919812}). In particular, the above implies that the dynamical system $T$ is topologically transitive under the assumption that $X$ is a separable Banach space.

For each $x \in X \setminus \{0\}$ and any $n \in \mathbb{N}$, we use the following notation
\[
T^{-n}(x) := T^{-n}(\{x\}) = \{v \in X : T^n(v) = x\} \,.
\]

As $T$ is surjective but not injective, we obtain that $T^{-1}(x)$ is a non-empty and non-singleton set. Moreover, for each $v \in T^{-1}(x)$ it is satisfied  the expression
\[
T^{-1}(x) = \mathrm{Ker}(T) + \{v\} = \{z + v : z \in \mathrm{Ker}(T) \}\,.
\]

In other words, $T^{-1}(x)$ is isometrically isomorphic to
$\mathrm{Ker}(T)$; this property will be quite useful in the  definition of the Ruelle operator (in the same way as  in \cite{LMSV19}).

Denote by $\mathcal{C}(X)$ the set of {\em continuous functions} from $X$ into $\mathbb{R}$ and by $\mathcal{C}_b(X)$ the set of {\em bounded continuous functions} from $X$ into $\mathbb{R}$.  The set $\mathcal{C}_b(X)$ equipped with the uniform norm $\|\cdot\|_\infty$, which is given by $\|\varphi\|_\infty := \sup\{|\varphi(x)| : x \in X\}$,  is a Banach space.

\begin{definition}
 We say that a potential $A \in \mathcal{C}(X)$ has {\em summable variations} with respect to the bounded linear operator $T : X \to X$, if
\begin{equation}
\label{summable-variation}
V_T(A) := \sum_{n = 1}^{+\infty} V_{T,n}(A) < +\infty \;,
\end{equation}
where
\begin{equation}
\label{summable-variation1}
V_{T,n}(A) := \sup\{|A(z_n + x_n) - A(z_n + y_n)| :\; z_n \in \mathrm{Ker}(T^n),\; x_n, y_n \in E_n\}.
\end{equation}
\end{definition}

We denote by $\mathcal{SV}_T(X)$ the set of potentials with summable variations with respect to $T$, i.e., the ones satisfying the equation in \eqref{summable-variation}. Note that the {\it summable variations property} doesn't implies boundedness of the potential $A$. However, this property guarantees a good behavior of the images of the Ruelle operator associated to the potential $A$ when the operator acts on the set of bounded continuous functions.

Given a function $\varphi \in \mathcal{C}(X)$ and $\alpha \in (0, 1]$, define
\[
\mathrm{Hol}^{\alpha}_{\varphi} := \sup_{x \neq y}\frac{|\varphi(x) - \varphi(y)|}{\|x - y\|^{\alpha}_X} \;.
\]

A function $\varphi$ is called {\em $\alpha$-H\"older continuous}, if $\mathrm{Hol}^{\alpha}_{\varphi} < +\infty$. The set of all the $\alpha$-H\"older continuous functions from $X$ into $\mathbb{R}$ is denoted by $\mathcal{H}_{\alpha}(X)$ and the set of all the {\em bounded $\alpha$-H\"older continuous functions} is denoted by $\mathcal{H}_{b, \alpha}(X)$.

\smallskip

Given  $T$ as above, we consider an {\em a priori} probability measure $\nu$ on the kernel of $T$ whose support is all the kernel. For instance, when the kernel has dimension $n$, we could take $\nu$ as the Gaussian distribution on $\mathbb{R}^n$ (which is isomorphic to the kernel of $T$) with mean zero and variance $1$. More precisely, taking $\nu := f\;dx$, with $f(x) := \frac{1}{(2\pi)^{n/2}}e^{-\|x\|_2^2/2}$.

\begin{definition}  \label{Def:adapted}
We say that a probability $\nu$ has {\em strong adapted tails} if, for any $\epsilon > 0$ there exists a sequence of positive numbers $(\kappa_n)_{n \geq 1}$ such that
\begin{enumerate}
\item $\sum_{n=1}^{+\infty} \nu([-p(T^n) \kappa_n,\; p(T^n) \kappa_n]^{c}) < \epsilon$;
\item the sequence $ (\kappa_n)_{n \geq 1} $ belongs to $l^1(\mathbb{R})$.
\end{enumerate}
\end{definition}

The above conditions are stronger than the ones presented  in the definition of {\it adapted tails measure} that appears in \cite{LMSV19}. That is, any probability measure satisfying the conditions of Definition \ref{Def:adapted} is also  a measure with adapted tails in the sense of \cite{LMSV19}.
\smallskip

Now, we define a Ruelle operator acting on the set $\mathcal{C}_b(X)$, which will help us to find  the so-called  equilibrium states and ground states, via the existence of Gibbs states and the variational principle. Given a bounded above potential $A \in \mathcal{H}_\alpha(X)$ and an {\em a priori} Borelian probability measure $\nu$, supported on $\mathrm{Ker}(T)$ and satisfying the strong adapted tails property, the {\em Ruelle operator} $\mathcal{L}_A$ of the potential $A:X \to \mathbb{R}$ is defined as the map assigning to each $\varphi \in \mathcal{C}_b(X)$ the function $\mathcal{L}_A(\varphi) \in \mathcal{C}_b(X)$ given by
\begin{equation}
\label{Ruelle-operator}
\mathcal{L}_A(\varphi)(x) := \int_{z \in \mathrm{Ker}(T)}e^{A(z + v)}\varphi(z + v)d\nu(z), \; \; v \in T^{-1}(x) \;.
\end{equation}

We say that the potential $A$ is {\it normalized} (for the {\it a priori} probability $\nu$) if  $\mathcal{L}_A(1)=1.$ 

Denote by $\mathcal{B}(X)$ the set of {\em finite Borelian measures} on $X$. By well-known  properties of the dual operator (see for instance \cite{LMSV19}), we can define the  dual Ruelle operator $\mathcal{L}_A^*$   as the map that assigns to each $\mu \in \mathcal{B}(X)$ the finite Borelian measure $\mathcal{L}_A^*(\mu)$, which for each $\varphi \in \mathcal{C}_b(X)$ satisfies
\[
\int_X \varphi d\Bigl( \mathcal{L}_A^*(\mu) \Bigr) := \int_X \mathcal{L}_A(\varphi) d\mu \;.
\]

We follow the basic lines of the proof of the main Theorem in \cite{LMSV19}, with a modification in the part that guarantees the existence of the main eigenfunction. We adapt the reasoning of  Section $3$ in \cite{CSS19}, to the linear dynamics setting (see also \cite{BS16}). In this case it is not difficult to check that for any bounded above potential $A \in \mathcal{H}_\alpha(X) \cap \mathcal{SV}_T(X)$, there is an eigenvalue $\lambda_A > 0$ and a strictly positive eigenfunction $\psi_A \in \mathcal{H}_{b, \alpha}(X)$ associated with $\lambda_A$. Moreover, under the assumption that $\mathcal{L}_A(1) = 1$, it is possible to guarantees the existence of a {\em Gibbs state} $\mu_A$ (i.e. a fixed point of the operator $\mathcal{L}^*_A$).

The hypothesis on \cite{LMSV19} was that the potential was bounded (above and below) and we will need  here for some of our results a lack of  boundedness by below of the potential $A$. 


Note that even in the case when $A$ is not a normalized potential, we are able to guarantee the existence of an eigenprobability $\rho_A$ (for the dual Ruelle operator $\mathcal{L}_A^*$) associated with the eigenvalue $\lambda_A$. Actually, the probability  $\rho_A$ has the same support of the Gibbs state associated with the associated normalized potential of the potential $A$ (to  be defined below). Indeed, since the potential
$$
\overline{A} := A + \log(\psi_A) - \log(\psi_A \circ T) - \log(\lambda_A) \;.
$$
is a normalized potential, one gets that $\mathcal{L}_{\overline{A}}^*\mu_{\overline{A}} = \mu_{\overline{A}}$. Thus, denoting $\mu_A = \mu_{\overline{A}}$, it follows that $\rho_A = \frac{1}{\psi_A} d\mu_A$ satisfies the desired properties.   

We call $\overline{A}$ the {\em associated normalized potential} for $A$, we say that $\mu_A$ is the {\em Gibbs state} for the (non-normalized) potential $A$, $\rho_A$ is the {\em conformal measure} associated with $A$ and $\lambda_A > 0$ is the {\em main eigenvalue} of $\mathcal{L}_A$.

Hereafter, we use the notation $\mathcal{P}(X)$ for the set of {\em Borelian probability measures} on $X$ and we denote by $\mathcal{P}_T(X) \subset \mathcal{P}(X)$ the ones that are {\em invariant} by the action of $T$.

\begin{definition}
Given an {\em a priori} probability measure $\nu$,  supported on the Kernel of $T$, and $\mu \in \mathcal{P}_T(X)$, the {\em entropy} of $\mu$ is defined as
\begin{equation} \label{rrt}
h_\nu (\mu) := \inf\Bigl\{ \int_X \log\Bigl(\frac{\mathcal{L}_0(u)}{u}\Bigr) d\mu : u \in \mathcal{C}_b(X),\; u > 0 \Bigr\}.
\end{equation}
\end{definition}

\begin{remark} \label{neg}
The values of $h_\nu (\mu)$ are non-positive and for the measure $\mu$ of maximal entropy the value is $h_\nu (\mu)=0$.  In this last case, we can guarantee the existence of such $\mu$, because the potential $B \equiv 0$ is a normalized potential.
\end{remark}

Considering the symbolic space $\{1, 2, ... , d\}^\mathbb{N}$, the shift map $\sigma: \{1, 2, ... , d\}^\mathbb{N} \to \{1, 2, ... , d\}^\mathbb{N},$ and taking $\nu$ as the {\it counting measure} on $\{1, 2, ... , d\}$,  the real positive value obtained from the analogous expression to \eqref{rrt} (using the classical Ruelle operator), is exactly the Kolmogorov-Shannon entropy (see \cite{Lo1,MR3377291,SV20}). If we take $\nu$ as the {\it normalized counting probability} on $\{1, 2, ... , d\}$, then, we get that the value obtained from \eqref{rrt} is the Kolmogorov-Shannon entropy minus the value $\log d$ (therefore, a non  positive number as explained in \cite{MR3377291}).

When the set of preimages of any point with respect to the dynamics is not a countable set, it is not appropriate to define entropy via dynamical partitions. This happens for instance when considering the shift acting on the symbolic space $M^\mathbb{N}$, where $M$ is a compact metric space (like the case where $M$ is the unitary circle or the interval $[0, 1]$). Then, alternatively, one can define entropy  via the information provided by the Ruelle operator (which depends on an {\em a priori} probability measure $\nu$ as above in \eqref{rrt}, and taking the Ruelle operator associated with the potential which is constant equal to zero). We point out that (in the general case)  it is required to take  $\nu$ as a probability (and not a measure) in order for the expression  \eqref{rrt} to be well-defined .  By taking $\nu$ as a probability (not a infinite measure) the value we obtain
in \eqref{rrt} is non-positive, and the maximal possible value of the entropy of an invariant probability is the value zero. We refer the reader to  \cite{MR3377291} and \cite{SV20} for more details.

In this way,  it is natural to adapt this point of view in our setting; defining entropy of a $T$-invariant probability via (\ref{rrt}). It is important to notice that (\ref{rrt}) depends on the so-called  {\em a priori} probability measure. Hence, fixing a probability (for instance the natural standard Gaussian measure on $\mathrm{Ker}(T)$), such definition results in a topological invariant with respect to the map $T$.

\begin{definition}
Given a bounded above potential $A \in \mathcal{H}_\alpha(X) \cap \mathcal{SV}_T(X)$, we call the  {\em pressure} of $A$  the value
\[
P_\nu(A) := \sup\Bigl\{ h_\nu(\mu) + \int_X A d\mu : \mu \in \mathcal{P}_T(X) \Bigr\} \,.
\]

In addition, we say that $\widehat{\mu} \in  \mathcal{P}_T(X) $ is an {\em equilibrium state} for $A$, if
$$
P_\nu(A)= h_\nu(\widehat{\mu}) + \int_X A d\widehat{\mu} \;.
$$
\end{definition}

The first result of our paper claims  that given a potential $A$ satisfying suitable conditions (see \cite{LMSV19} for the assumptions),  the set of Gibbs states associated with $A$ is contained into the set of equilibrium states for $A$ and $P_\nu(A) = \log(\lambda_A)$, where $\lambda_A$ is the main eigenvalue of the Ruelle operator $\mathcal{L}_A$. Actually, in \cite{LMSV19} the authors assume that the potential $A$ defining the Ruelle operator is bounded and H\"older continuous. However, as we already mentioned,  the proof in \cite{LMSV19} also guarantees the existence of such Gibbs states when the potential $A$ is H\"older continuous and bounded above.

The precise statement of our first result  is the following:

\begin{theorem}
\label{variational-principle}
Consider a separable Banach space $X$ and a bounded linear operator $T : X \to X$, such that, $T$ is surjective but not bijective. Assume also that $X = \mathrm{Ker}(T^n) \oplus E_n$,  with $T(E_{n+1}) \subset E_n$ for each $n \in \mathbb{N}$, $0 < \dim(\mathrm{Ker}(T)) < +\infty$, and, moreover, suppose that $\sum_{n=1}^{+\infty} p(T^n)^{-\alpha} < +\infty$. For each bounded above potential $A \in \mathcal{H}_\alpha(X) \cap \mathcal{SV}_T(X)$ denote the Gibbs state associated with $A$ by $\mu_A$. Then, the following variational principle is satisfied
\[
P_\nu(A) = \log(\lambda_A) = h_\nu(\mu_A) + \int_X A d\mu_A \,.
\]

That is, the Gibbs state $\mu_A$ associated with $A$ is also an equilibrium state for the potential $A$.
\end{theorem}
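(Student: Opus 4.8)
The plan is to reduce the whole statement to the associated normalized potential $\overline{A}$ and then prove two matching inequalities: a universal upper bound $h_\nu(\mu) + \int_X A\, d\mu \le \log(\lambda_A)$ valid for every $\mu \in \mathcal{P}_T(X)$, which yields $P_\nu(A) \le \log(\lambda_A)$, and the reverse equality evaluated at $\mu = \mu_A$, which simultaneously shows $P_\nu(A) \ge \log(\lambda_A)$ and that the supremum is attained at $\mu_A$.

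The first step is to record the cohomological identity. Writing $A = \overline{A} - \log(\psi_A) + \log(\psi_A \circ T) + \log(\lambda_A)$ and integrating against an arbitrary $T$-invariant $\mu$, the coboundary $\log(\psi_A \circ T) - \log(\psi_A)$ integrates to zero by $T$-invariance of $\mu$ (here one uses that $\psi_A \in \mathcal{H}_{b,\alpha}(X)$ is bounded and strictly positive so that $\log(\psi_A)$ is $\mu$-integrable). Hence $\int_X A\, d\mu = \int_X \overline{A}\, d\mu + \log(\lambda_A)$ for every $\mu \in \mathcal{P}_T(X)$, and the theorem reduces to showing $h_\nu(\mu) + \int_X \overline{A}\, d\mu \le 0$ for all $\mu$, with equality at $\mu_A$.

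For the upper bound I would feed the test function $u = e^{\overline{A}}$ into the infimum defining the entropy. Using the relation $\mathcal{L}_0(\varphi) = \mathcal{L}_{\overline{A}}(e^{-\overline{A}}\varphi)$ together with normalization $\mathcal{L}_{\overline{A}}(1) = 1$, one gets $\mathcal{L}_0(u) = \mathcal{L}_{\overline{A}}(1) = 1$, so $\mathcal{L}_0(u)/u = e^{-\overline{A}}$ and therefore $h_\nu(\mu) \le \int_X \log(\mathcal{L}_0(u)/u)\, d\mu = -\int_X \overline{A}\, d\mu$ for every invariant $\mu$. For the matching lower bound at $\mu_A$ I would invoke Jensen's inequality: for fixed $x$ the map $\varphi \mapsto \mathcal{L}_{\overline{A}}(\varphi)(x)$ is the expectation with respect to the probability $e^{\overline{A}(z+v)}\,d\nu(z)$ on $T^{-1}(x)$, so concavity of $\log$ gives $\log \mathcal{L}_{\overline{A}}(g) \ge \mathcal{L}_{\overline{A}}(\log g)$. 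Taking $g = e^{-\overline{A}} u$ for arbitrary $u \in \mathcal{C}_b(X)$ with $u > 0$, and integrating against $\mu_A$ via $\mathcal{L}_{\overline{A}}^*\mu_A = \mu_A$, yields $\int_X \log(\mathcal{L}_0(u)/u)\, d\mu_A \ge -\int_X \overline{A}\, d\mu_A$; taking the infimum over $u$ gives $h_\nu(\mu_A) \ge -\int_X \overline{A}\, d\mu_A$. Combining the two bounds gives $h_\nu(\mu_A) = -\int_X \overline{A}\, d\mu_A$, whence $P_\nu(A) = \log(\lambda_A) = h_\nu(\mu_A) + \int_X A\, d\mu_A$.

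The main obstacle is precisely the lack of boundedness below of $A$, which propagates to $\overline{A}$ and to the test function $u = e^{\overline{A}}$: a priori $\overline{A}$ need not be bounded above unless the eigenfunction $\psi_A$ is bounded away from $0$, so $e^{\overline{A}}$ may fail to lie in $\mathcal{C}_b(X)$ and $\int_X \overline{A}\, d\mu$ may equal $-\infty$. I would circumvent this by replacing $u = e^{\overline{A}}$ with the truncations $u_N := \min(e^{\overline{A}}, N) \in \mathcal{C}_b(X)$, $u_N > 0$, for which $\mathcal{L}_0(u_N) \uparrow 1$ and $\log(\mathcal{L}_0(u_N)/u_N) \le -\min(\overline{A}, \log N)$; letting $N \to \infty$ and applying monotone convergence then delivers $h_\nu(\mu) \le -\int_X \overline{A}\, d\mu$ in full generality. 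The genuinely delicate points to be settled are (i) using the $\alpha$-Hölder regularity, the summable variations and the strong adapted tails of $\nu$ to guarantee that $\int_X \overline{A}\, d\mu_A$ is finite, so that the claimed entropy value is not vacuous, and (ii) justifying the interchange $\int_X \mathcal{L}_{\overline{A}}(g)\, d\mu_A = \int_X g\, d\mu_A$ for the possibly unbounded $g$ appearing in the Jensen step, again through a truncation and dominated-convergence argument.
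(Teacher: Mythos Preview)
Your argument is essentially the paper's: the upper bound comes from inserting a single test function into the entropy infimum, and equality at $\mu_A$ from Jensen's inequality applied to the normalized kernel $\mathcal{L}_{\overline{A}}$. The only difference is that the paper takes $u=e^{A}\psi_A$ (manifestly in $\mathcal{C}_b^+(X)$ since $A$ is bounded above and $\psi_A\in\mathcal{H}_{b,\alpha}(X)$) rather than your $u=e^{\overline{A}}$; your truncation programme is in fact unnecessary, because the summable-variations hypothesis forces $\log\psi_A$ to have bounded oscillation (the paper records $|\log\psi_A(x)-\log\psi_A(y)|\le\kappa_A V_T(A)$ in the proof of its Gibbs inequality), so $\overline{A}$ is bounded above, $e^{\overline{A}}\in\mathcal{C}_b^+(X)$, and both of your ``delicate points'' dissolve.
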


Theorem \ref{variational-principle} is widely known in the mathematical literature as the variational principle of pressure, or  the Ruelle Theorem. This result will be a quite useful instrument throughout the paper.  We will use it as a tool to find the so-called  {\it maximizing measures} and {\it ground states} of the potential $A$ in the setting of Linear Dynamics. Below we establish conditions in order to guarantee the existence of such measures. We would like to point out to the reader that the proof of this variational principle depends exclusively on properties of the Ruelle operator defined in \eqref{Ruelle-operator}. However, we include it in order to facilitate the understanding of the theory in the setting  that we are interested in.

\begin{definition}
A {\em Schauder basis} for the Banach space $X$ is a  sequence $(e_k)_{k \geq 1}$ of vectors in $X$, such that, for each $x \in X$ there is a unique sequence of real numbers $(\alpha_k)_{k \geq 1}$ satisfying
\begin{equation}
\label{Schauder-basis}
\lim_{n \to +\infty} \Bigl\|x - \sum_{k=1}^{n} \alpha_k e_k\Bigl\|_X = 0 \,.
\end{equation}
\end{definition}

First note that any Banach space equipped with a Schauder basis results in a separable space. Furthermore, it is widely known that any Schauder basis for $X$ induces a corresponding basis of {\em coordinate functions} for the dual space $X'$, which is given by a sequence $(\pi_k)_{k \geq 1}$ on $X'$, such that, for each $i, j \in \mathbb{N}$ it is satisfied $\pi_j(e_i) = \delta_{ij}$. Hereafter, we use the notation $x := \sum_{k=1}^{+\infty} \alpha_k e_k$ when the vector $x \in X$ satisfies the limit in \eqref{Schauder-basis} for the values $\alpha_k = \pi_k(x)$ for each $k \in \mathbb{N}$. Actually, note that the sequence $(\pi_k)_{k \geq 1}$ is a {\em total subset} of $X'$, i.e., $\pi_k(x) = 0$ for each $k \in \mathbb{N}$ implies that $x = 0$.

There are several examples of Banach spaces equipped with a Schauder basis, for instance $X \in \{c_0(\mathbb{R}),\; l^p(\mathbb{R}),\; 1 \leq p < +\infty\}$ and $X = H$, where $H$ is an arbitrary separable Hilbert space; these are typical examples of spaces satisfying that property. In particular, the set of coordinate functions agrees with the Schauder basis itself when $X = H$ is an arbitrary separable Hilbert space. Moreover, in the last case, the Schauder basis is called as {\em Hilbert basis} of $H$.

\begin{definition}
\label{summability-definition}
Consider a Banach space $X$ equipped with a Schauder basis $(e_k)_{k \geq 1}$. Define the set
\[
X_{i, j} := \{x \in X :\; j \leq |\alpha_i| \leq j + 1\}
\]

We say that a potential $A \in \mathcal{C}(X)$ satisfies the {\em summability condition}, if for each $i \in \mathbb{N}$ is satisfied
\begin{equation}
\label{summability-condition}
\sum_{j = 1}^{+\infty} e^{\sup\{A(x) :\; x \in X_{i, j} \}} < +\infty \,.
\end{equation}
\end{definition}

In particular, the so-called  {\it summability condition} implies that for each $i \in \mathbb{N}$ it is satisfied the property $\lim_{j \to +\infty} \sup\{A(x) :\; x \in X_{i, j} \} = -\infty$. Hence, in that case we get $\sup(A) < +\infty$, but $A$ cannot be a bounded below potential.

\begin{definition}
Given a potential $A \in \mathcal{C}(X)$, a measure $\mu_{\max} \in \mathcal{P}_T(X)$ is called a {\em maximizing measure} for $A$, if $\int_X A d \mu_{\max} = m(A)$, where
\begin{equation}
\label{maximizing-measure}
m(A) := \sup\Bigl\{\int_X A d\mu : \mu \in \mathcal{P}_T(X) \Bigr\} \;.
\end{equation}

We denote by $\mathcal{P}_{\max}(A)$ the set of invariant probability measures attaining the supremum in \eqref{maximizing-measure}, which is usually called as the set of {\em maximizing measures} of $A$.
\end{definition}

It is important to point out that for any potential $A$ in $\mathcal{C}(X)$ satisfying the summability condition, we have $m(A) < +\infty$. However, even assuming that condition, it is possible that $\mathcal{P}_{\max}(A) = \emptyset$. In the following example, we present a case where the set of maximizing measures is a non-empty subset of $\mathcal{P}_T(X)$.

\begin{example} \label{vio}
Consider a bounded above potential $A : X \to \mathbb{R}$. When $v$ is such that $T^k(v) = v$ the probability measure $\widetilde{\mu}$ with support in the periodic orbit
$\{v, T(v), T^2(v), ... ,T^{k-1}(v)\}$ of the form
$$
\widetilde{\mu} := \frac{1}{k} \,\sum_{j=0}^{k-1} \delta_{T^j(v)} \;
$$
is $T$-invariant. Then, in this case $\frac{1}{k} \sum_{j=0}^{k-1} A (T^j(v))\leq  m(A)$. If for all $j = 0, 1, 2, ... , k-1$, we have that  $A(T^j(v))= m(A)$, it follows that $\widetilde{\mu}$ is maximizing for $A$
\end{example}

In order to prove the existence  of ground states and maximizing measures in the case of bounded above potentials belonging to the set $\mathcal{H}_\alpha(X) \cap \mathcal{SV}_T(X)$, we will take advantage of certain properties of  Gibbs states, which are consequences of the main Theorem in \cite{LMSV19}. The next theorem claims that under suitable assumptions for the potential $A$, the set of ground states is a non-empty set. Note that this  is a non-trivial claim. This is so, by the lack of compactness of the Banach space $X$ and also by the fact that the closed balls are not compact sets on infinite-dimensional Banach spaces (by the Theorem of Riesz).

The statement of the result is as follows.

\begin{theorem}
\label{limit-theorem}
Let $X$ be a Banach space equipped with a Schauder basis $(e_k)_{k \geq 1}$ and consider $T : X \to X$ a bounded linear operator surjective but not bijective. Also assume that $\sum_{n=1}^{+\infty} p(T^n)^{-\alpha} < +\infty$ and $\mathrm{Ker}(T^n) = \mathrm{span}\{e_{m_1}, ... , e_{m_n}\}$ for each $n \in \mathbb{N}$, where $(m_i)_{i \geq 1}$ is some bijective sequence of natural numbers. Then, for any potential $A \in \mathcal{H}_\alpha(X) \cap \mathcal{SV}_T(X)$ satisfying the summability condition, the family of equilibrium states $(\mu_{tA})_{t>1}$ has accumulation points at infinity - denoted generically by $\mu_\infty$. Furthermore, the probability $\mu_\infty$ will be maximizing for $A$.
\end{theorem}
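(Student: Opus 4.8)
The plan is to separate the statement into two assertions that are proved by different means: first, that the family $(\mu_{tA})_{t>1}$ has weak-$*$ accumulation points as $t\to+\infty$; second, that any such accumulation point $\mu_\infty$ is maximizing for $A$. The second assertion is the standard zero-temperature argument and uses only the variational principle of Theorem \ref{variational-principle}. The first is the genuinely hard part, because $X$ is infinite dimensional, so bounded sets are not precompact and mere boundedness of the family gives nothing; the real work is a \emph{tightness} estimate.

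I would dispose of the maximizing assertion first, granting that accumulation points exist. Fix $t>1$. Since $A$ is bounded above, $\alpha$-H\"older and of summable variations, so is $tA$; moreover for each $i$ one has $\sup_{X_{i,j}}A\le 0$ for all large $j$, whence $e^{t\sup_{X_{i,j}}A}\le e^{\sup_{X_{i,j}}A}$, so the summability condition passes to $tA$ uniformly in $t>1$. Thus Theorem \ref{variational-principle} applies to $tA$ and gives $P_\nu(tA)=h_\nu(\mu_{tA})+t\int_X A\,d\mu_{tA}$. Combining $h_\nu\le 0$ (Remark \ref{neg}) with the variational inequality $P_\nu(tA)\ge h_\nu(\mu)+t\int_X A\,d\mu$ valid for every $\mu\in\mathcal P_T(X)$, I obtain, for each such $\mu$ with $h_\nu(\mu)>-\infty$,
\[
\int_X A\,d\mu_{tA}\ \ge\ \int_X A\,d\mu+\frac{h_\nu(\mu)-h_\nu(\mu_{tA})}{t}\ \ge\ \int_X A\,d\mu+\frac{h_\nu(\mu)}{t}.
\]
Passing to a subsequence $t_n\to+\infty$ with $\mu_{t_nA}\to\mu_\infty$, the last term vanishes, so $\liminf_n\int_X A\,d\mu_{t_nA}\ge\int_X A\,d\mu$; and since $A$ is continuous and bounded above, $\mu\mapsto\int_X A\,d\mu$ is weak-$*$ upper semicontinuous, giving $\int_X A\,d\mu_\infty\ge\limsup_n\int_X A\,d\mu_{t_nA}$. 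Chaining these inequalities yields $\int_X A\,d\mu_\infty\ge\int_X A\,d\mu$ for all finite-entropy $\mu$, and taking the supremum recovers $\int_X A\,d\mu_\infty\ge m(A)$ (one checks the supremum is unchanged by restricting to finite-entropy measures); the reverse inequality is trivial once $\mu_\infty\in\mathcal P_T(X)$, which holds because $T$ is continuous and hence $\mathcal P_T(X)$ is weak-$*$ closed.

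For the existence of accumulation points I would invoke Prokhorov's theorem: $X$ is separable (it carries a Schauder basis) and hence Polish, so it suffices to prove that $(\mu_{tA})_{t>1}$ is tight. The compact sets are produced from the basis. Writing $P_N$ for the canonical projections $P_N(x)=\sum_{k=1}^N\pi_k(x)e_k$, a bounded set is relatively compact once $\sup_{x}\|x-P_N(x)\|_X\to0$ as $N\to\infty$, and the hypothesis $\mathrm{Ker}(T^n)=\mathrm{span}\{e_{m_1},\dots,e_{m_n}\}$ singles out the coordinate directions $e_{m_n}$ along which $T^{-n}$ and the \emph{a priori} measure act. Given $\epsilon>0$, the strong adapted tails of $\nu$ (Definition \ref{Def:adapted}) provide $(\kappa_n)_{n\ge1}\in l^1(\mathbb R)$ with $\sum_n\nu([-p(T^n)\kappa_n,p(T^n)\kappa_n]^c)<\epsilon$, and I would take
\[
K_\epsilon:=\{x\in X:\ |\pi_{m_n}(x)|\le\kappa_n\ \text{for all }n\ge1\}.
\]
Because $(m_i)_{i\ge1}$ is a bijection of $\mathbb N$, every coordinate is constrained in $K_\epsilon$, and the summability $\sum_n\kappa_n<+\infty$ forces the basis tails $\|x-P_Nx\|_X$ to be uniformly small on $K_\epsilon$; hence $K_\epsilon$ is compact.

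The main obstacle, and the step on which I would spend most of the effort, is the uniform bound $\mu_{tA}(K_\epsilon)\ge1-C\epsilon$ for all $t>1$. Here the construction of the Gibbs/conformal measure from \cite{LMSV19} enters: the law of the coordinate $\pi_{m_n}$ under $\mu_{tA}$ is governed by the level-$n$ integration against $\nu$ in the Ruelle operator \eqref{Ruelle-operator}, weighted by $e^{tA}$ and normalized by the eigenvalue. Since $A$ is bounded above and satisfies the summability condition, and since $t\ge1$ only improves the exponential weights as noted, these contributions are controlled \emph{uniformly in} $t$, so that $\mu_{tA}(\{|\pi_{m_n}(x)|>\kappa_n\})$ is dominated by a constant multiple of $\nu([-p(T^n)\kappa_n,p(T^n)\kappa_n]^c)$, independently of $t$. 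Summing over $n$ and using condition (1) of Definition \ref{Def:adapted} then gives $\mu_{tA}(K_\epsilon^{c})<C\epsilon$ uniformly in $t>1$, which is the desired tightness. Making the disintegration precise, so that the $p(T^n)$ scaling appearing in the tails of $\nu$ matches exactly the scaling of the $e_{m_n}$-coordinate induced by $T^{-n}$, is the technical crux; once it is established, Prokhorov yields the accumulation points $\mu_\infty$ and the second paragraph completes the proof.
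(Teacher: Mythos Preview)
Your argument for the maximizing part is essentially the paper's Lemma \ref{maximizing-ground-states-lemma} and is fine; the paper phrases it via $m(A)=\lim_{t\to\infty}\frac{\log(\lambda_{tA})}{t}$ together with $h_\nu\le 0$, but the content is the same.

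The tightness argument, however, has a genuine gap, and your route diverges from the paper in a way that does not obviously close. You propose to control $\mu_{tA}(\{|\pi_{m_n}(x)|>\kappa_n\})$ by a constant times the $\nu$-tail $\nu([-p(T^n)\kappa_n,p(T^n)\kappa_n]^c)$, uniformly in $t$, invoking the strong adapted tails of $\nu$. But the Ruelle construction weights the $\nu$-integration by $e^{tA}$ and normalises by $\lambda_{tA}$; as $t\to\infty$ both blow up, and nothing in your sketch explains why their ratio stays bounded on the relevant sets. The adapted-tails hypothesis on $\nu$ is used in \cite{LMSV19} to build each individual Gibbs state, not to obtain uniformity in $t$.

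The paper's mechanism is different and uses the summability condition on $A$ directly, not the tails of $\nu$. It first proves a one-sided Gibbs inequality (Lemma \ref{Gibbs-property}): for the level-one cylinders $X_{m_1,j}$ one has $\mu_{tA}(X_{m_1,j})\le e^{tA(\widetilde x)-\log(\lambda_{tA})+t\cdot\text{const}}$. The crucial step you are missing is a \emph{lower bound on the eigenvalue}: comparing $tA$ with the zero potential in the variational principle yields $\log(\lambda_{tA})\ge tI$ with $I=\int A\,d\mu_B$ finite, whence $\mu_{tA}(X_{m_1,j})\le e^{t(\sup_{X_{m_1,j}}A - I + \text{const})}$. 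The summability condition forces $\sup_{X_{m_1,j}}A\to-\infty$, so for $j$ large the exponent is negative and $t>1$ only helps; this is what gives uniformity in $t$. Finally, $T$-invariance reduces every coordinate tail $\mu_{tA}(X_{i,j})$ to the single family $\mu_{tA}(X_{m_1,j})$, and the cylindrical-set version of Prokhorov (Lemma 3 in \cite{Pro61}) converts these coordinate estimates into tightness without having to build a compact set in $X$ directly. Your compact $K_\epsilon$ would also need a separate argument (summability of the $\kappa_n$ does not by itself force uniform decay of basis tails in a general Schauder-basis space), which the cylindrical formulation sidesteps.
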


 Our second interest in this paper is to be able to locate the union of the supports  of maximizing measures associated with a fixed potential $A \in \mathcal{C}(X)$.     
A tool widely used for this purpose, in the classic literature on Ergodic Optimization, is based on calibrated sub-actions. In order to use this tool, some hypotheses are needed. All of this, of course, for the case where $\mathcal{P}_{\max}(A)$ is a non-empty set.  Note that the last Theorem provides a useful way to guarantee existence of maximizing probabilities via ground states. Below, we present another technique that allows existence of maximizing measures in a more general setting where the potential does not necessarily satisfies decay conditions.

We would like to mention  that some results about the existence  of maximizing measures (where the underlying space is not compact)  for a certain class of potentials, defined on some Polish spaces with bounded expansive metrics, were presented  in \cite{MR2354972}.

Another  class of potentials where  $\mathcal{P}_{\max}(A)$ is a non-empty set is the following:

\begin{definition}
\label{maximizing-property}
Assume that the potential $A$ belongs to $\mathcal{C}(X)$. We say that $A$ satisfies the {\em maximizing property}, if there are vector subspaces $Y, Z \subset X$, such that $X = Y \oplus Z$, $0 < \dim(Y) < +\infty$, and for each $x = x_y + x_z$ belonging to $X$, with $x_y \in Y$ and $x_z \in Z$, we have
\begin{equation}
\label{max-prop-1}
A(x) \leq A(x_y) \;,
\end{equation}
and
\begin{equation}
\label{max-prop-2}
\lim_{\|x_y\|_X \to +\infty} A(x_y) = -\infty \;.
\end{equation}
\end{definition}

Note that the  maximizing property for $A$  implies that $\sup(A) < +\infty$. In Lemma \ref{maximizing-property-lemma} we prove that under suitable conditions of regularity for the potential $A$, the maximizing property implies that  $\mathcal{P}_{\max}(A)$ is a non-empty set.

Actually, in Lemma \ref{maximizing-ground-states-lemma} we  prove that $\mathcal{P}_{\max}(A) \neq \emptyset$ when there exist ground states (the last claim stated in Theorem \ref{limit-theorem}) Given $A$, the property $\mathcal{P}_{\max}(A) \neq \emptyset$ will guarantee that the sub-action obtained via  the Ma\~n\'e potential is well-defined. 
\begin{definition}
Given a bounded above potential $A \in \mathcal{C}(X)$, we say that a function $V \in \mathcal{C}(X)$ is a {\em sub-action} associated with $A$, if satisfies the following inequality
\begin{equation}
\label{sub-action}
V \circ T \geq V + A - m(A) \;.
\end{equation}
\end{definition}

There are continuous potentials $A$ without a sub-action (see for instance \cite{MR2191393}  and \cite{J2}). In general, some regularity of the potential $A$ is required for the existence of sub-actions (see \cite{MR1855838}).

It is easy to see that for any maximizing probability $\mu$ for $A$ and each sub-action $V$ associated with $A$, the support of $\mu$ is contained into the set
\begin{equation}
\label{util}
\Omega(A) := \{x \in X :\; V(T(x)) - V(x) - A(x) + m(A) = 0\} \;.
\end{equation}

It is also true that any invariant probability measure $\mu$ with support contained in the set $\Omega(A)$, results in a maximizing probability for $A$ (see \cite{MR1855838}). In this way, a sub-action could help to find explicitly the support of a maximizing measure (see for instance \cite{MR1855838,MR3114331, MR3701349}).

\begin{definition}
Let $V$ be a sub-action for $A$, if for any $x \in X$ there exists a point $y \in T^{-1}(x)$ which attains the equality in \eqref{sub-action} i.e.
$$
V(x) = V(y) + A(y) - m(A) \;,
$$
we say that $V$ is a {\em calibrated sub-action}.
\end{definition}

\begin{remark}
Note that the definition of calibrated sub-action appearing above is equivalent to say that 
\[
V(x) := \max_{y \in T^{-1}(x)} \{V(y) + A(y) - m(A)\} \;.
\]

The so-called sub-actions are also known in the mathematical literature as revelations (for details see \cite{GL1, MR2191393,J2}).
\end{remark}

There are cases where one can show existence of maximizing probabilities for continuous potentials even if  we cannot apply the Ruelle-Perron-Frobenius Theorem formalism. In this case, we can consider maximizing probabilities without talking about ground states (also known in the mathematical literature as limits at temperature zero). Sub-actions will be helpful anyway (see the expression  in \eqref{util}). Actually, in example \ref{maine} we will consider a potential which  is not bounded below but exhibits an explicit sub-action.

In order to prove the existence  of sub-actions we introduce an additional tool known in the classical mathematical literature as Ma\~n\'e potential (for details see  \cite{MR1855838,GL1} and section 3 in \cite{CLO}).

\begin{definition}
Given a potential $A \in \mathcal{H}_{\alpha}(X)$ we define the {\em Ma\~n\'e potential} $\phi_A$ associated with $A$ as the map given by
\begin{equation}
\label{Mane-potential}
\phi_A(x, y) := \lim_{\epsilon \to 0}\sup_{n \in \mathbb{N}}\sup_{\substack{x' \in T^{-n}(y) \\ \|x - x'\|_X < \epsilon}} S_n(A - m(A))(x') \;,
\end{equation}

where $S_n(A)(x) := \sum_{j=0}^{n-1}A(T^j(x))$ is the $n$-th ergodic sum of the potential $A$  with respect to $T$.
\end{definition}

We claim that under the assumptions in Theorem \ref{sub-action-theorem}, we have that $\phi_A(x, y)$ belongs to $(-\infty, 0]$, for any pair of points $x, y \in X$ (for details of the proof see Section \ref{sub-actions-subsection}).

Now, we will present conditions to guarantee the existence of sub-actions associated with H\"older continuous potentials $A$ using ideas related to  the so-called Ma\~n\'e potential. 

The statement of our third main result is the following:

\begin{theorem}
\label{sub-action-theorem}
Consider $X$ a separable Banach space and $T : X \to X$ a bounded linear operator surjective but not injective. Assume that $X = \mathrm{Ker}(T^n) \oplus E_n$,  with $T(E_{n+1}) \subset E_n$ for each $n \in \mathbb{N}$, $0 < \dim(\mathrm{Ker}(T)) < +\infty$, and suppose that $\sum_{n=1}^{+\infty} p(T^n)^{-\alpha} < +\infty$.  Suppose also that $A \in \mathcal{H}_\alpha(X)$ and $\mathcal{P}_{\max}(A)$ is a non-empty set. Then, for each $x \in X$ the map $y \mapsto \phi_A(x, y)$  belongs to $\mathcal{H}_{\alpha}(X)$ and it is a calibrated sub-action for $A$.
\end{theorem}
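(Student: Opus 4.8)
The plan is to base the entire argument on the one-step recursion satisfied by $\phi_A$ together with a single bounded-distortion estimate powered by the standing hypothesis $\sum_{n=1}^{+\infty}p(T^n)^{-\alpha}<+\infty$. Fix the base point $x$ and write $V(y):=\phi_A(x,y)$. First I would record the elementary observation that any orbit segment of length $n$ issuing from some $x'$ with $\|x-x'\|_X<\epsilon$ and ending at a point $w$ (that is, $T^n(x')=w$) is simultaneously an orbit segment of length $n+1$ ending at $T(w)$, and that $S_{n+1}(A-m(A))(x')=S_n(A-m(A))(x')+\bigl(A(w)-m(A)\bigr)$. Taking the suprema over $n$ and over $x'$ and then letting $\epsilon\to0$ in \eqref{Mane-potential} yields, for every $w\in X$,
\[
V(T(w))\ \ge\ V(w)+A(w)-m(A),
\]
which is exactly the sub-action inequality \eqref{sub-action}. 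This step is purely formal and only requires that $V$ take finite values, so that the addition above is legitimate.

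The finiteness of $V$, together with the sharper claim $V\in(-\infty,0]$, is the place where the hypothesis $\mathcal{P}_{\max}(A)\neq\emptyset$ genuinely enters, and it is the first delicate point. The plan is to fix a maximizing measure $\mu_{\max}$ and, via Birkhoff's theorem and Poincar\'e recurrence, to select a generic recurrent point $q$ and times $n_\ell\to+\infty$ along which $T^{n_\ell}(q)$ returns near $q$ while the sums $S_{n_\ell}(A-m(A))(q)$ stay bounded (indeed $\int(A-m(A))\,d\mu_{\max}=0$). An arbitrary competitor orbit running from near $x$ to $y$ can then be controlled by comparing it, through the contracting inverse branches made available by the transitivity of $T$, with such recurrent segments; the extra contribution to the ergodic sum is dominated by the summable distortion series below. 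This produces a uniform upper bound that, after the normalization by $-m(A)$ built into \eqref{Mane-potential}, is $0$, and it simultaneously keeps $\phi_A$ bounded below so that $V$ is real-valued. I expect the careful grafting of orbits to be the most technical part of this step.

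The $\alpha$-H\"older regularity of $y\mapsto V(y)$ is where $\sum_n p(T^n)^{-\alpha}<+\infty$ is used decisively, and here the argument is clean. Given a near-optimal length-$n$ orbit $x'$ from near $x$ to $y$, I would lift the endpoint perturbation $y'-y$ backwards by choosing $x''\in T^{-n}(y')$ in the \emph{same} inverse branch as $x'$, matching the splittings $X=\mathrm{Ker}(T^{k})\oplus E_{k}$ at each step so that $T^j(x')-T^j(x'')\in E_{n-j}$. Then \eqref{p-norm} gives $\|T^j(x')-T^j(x'')\|_X\le\|y-y'\|_X/p(T^{n-j})$, whence
\[
\bigl|S_n(A-m(A))(x')-S_n(A-m(A))(x'')\bigr|\ \le\ \mathrm{Hol}^{\alpha}_{A}\,\|y-y'\|_X^{\alpha}\sum_{k=1}^{n}p(T^k)^{-\alpha}\ \le\ \mathrm{Hol}^{\alpha}_{A}\,\|y-y'\|_X^{\alpha}\sum_{k=1}^{+\infty}p(T^k)^{-\alpha}.
\]
Since the last series is finite and the bound is uniform in $n$, passing to the supremum and to the limit $\epsilon\to0$ gives $|V(y)-V(y')|\le \mathrm{Hol}^{\alpha}_{A}\bigl(\sum_k p(T^k)^{-\alpha}\bigr)\|y-y'\|_X^{\alpha}$, so $V\in\mathcal{H}_\alpha(X)$.

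It remains to upgrade the sub-action to a \emph{calibrated} one, i.e. to attain the equality $V(y)=V(w)+A(w)-m(A)$ for some $w\in T^{-1}(y)$, and this is the main obstacle. Running the recursion backwards, I would take near-optimal orbits realizing $V(y)$ and let $w_i\in T^{-1}(y)$ be their penultimate points; the peeling identity of the first paragraph shows that $g(w):=A(w)-m(A)+V(w)$ satisfies $g(w_i)\to V(y)=\sup_{w\in T^{-1}(y)}g(w)$, so $(w_i)$ is a maximizing sequence for the continuous function $g$. Because $T^{-1}(y)=w_0+\mathrm{Ker}(T)$ is a \emph{finite}-dimensional affine subspace, boundedness of $(w_i)$ would immediately furnish a convergent subsequence and hence, by continuity of $g$, a maximizer $w^{*}$ giving calibration. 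The crux is therefore to exclude escape to infinity within this coset: I expect this to follow from a coercivity statement, namely that $V(w)=\phi_A(x,w)\to-\infty$ fast enough as $\|w\|_X\to+\infty$ along $T^{-1}(y)$ — reaching a neighborhood of $x$ from a far point of the coset forces large negative ergodic sums — which dominates the at-most $O(\|w\|_X^{\alpha})$ growth of $A$ and thus forces $g(w)\to-\infty$. Establishing this decay from the expanding estimates $p(T^k)\|\cdot\|_X\le\|T^k(\cdot)\|_X$ and the summability hypothesis is the step I anticipate will require the most care.
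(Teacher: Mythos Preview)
Your derivation of the sub-action inequality and of the $\alpha$-H\"older regularity of $y\mapsto\phi_A(x,y)$ coincides with the paper's: both hinge on the one-step peeling identity $S_{n+1}(A-m(A))(x')=S_n(A-m(A))(x')+A(y)-m(A)$ and on the distortion series $\sum_n p(T^n)^{-\alpha}<\infty$, used exactly as you describe.

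For the finiteness and non-positivity of $\phi_A$, the paper takes a shorter route than your Birkhoff--recurrence--grafting plan: it proves separately (Lemma~\ref{bounded-mean}) that the hypothesis $\sum_n p(T^n)^{-\alpha}<\infty$ makes $T$ Devaney chaotic, so periodic orbits are dense and one obtains $S_n(A-m(A))\le0$ by comparison with periodic measures, whence $\phi_A\le0$ immediately. Your scheme may be workable but is considerably more laborious and is only sketched.

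The substantive gap is in calibration. You correctly extract that peeling the last step gives $V(z)=\sup_{w\in T^{-1}(z)}\{V(w)+A(w)-m(A)\}$, and you then try to upgrade the supremum to a maximum by compactness in the finite-dimensional fiber, which requires the coercivity $g(w)=V(w)+A(w)-m(A)\to-\infty$ as $\|w\|_X\to\infty$ along $T^{-1}(z)$. This coercivity is \emph{not} derivable from the hypotheses of the theorem: $A\in\mathcal{H}_\alpha(X)$ permits $A$ to \emph{grow} like $\|w\|_X^\alpha$, while $V\le0$ supplies no decay rate at all. The potential $A\equiv0$ already satisfies every hypothesis and yet yields $V\equiv0$ and $g\equiv0$ on each fiber, so a maximizing sequence can perfectly well run off to infinity; your argument would simply not terminate in this case. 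Hence the plan, as stated, cannot close. The paper sidesteps compactness entirely: it reruns the peeling identity with the suprema taken in the reverse order and reads off directly the opposite inequality $\phi_A(x,T(y))\le\phi_A(x,y)+A(y)-m(A)$. Combined with the sub-action inequality this gives \emph{equality} for every $y$, so that every preimage $y\in T^{-1}(z)$ already realizes $V(z)=V(y)+A(y)-m(A)$, and no limit, compactness, or coercivity argument is needed.
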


An example of a potential $A$ where the hypotheses of the above Theorem are satisfied is $A(x) = -\|x - v\|_X$, where $v$ is a fixed point for $T$. In this case, the probability with support on $v$ is maximizing for $A$ (see Example \ref{maine} for a more complete discussion).

\section{Existence of equilibrium states and maximizing measures}
\label{ground-states-section}

In this section, we present the proof of Theorem \ref{variational-principle}, which asserts the existence  of equilibrium states for the Gibbs states obtained in the Ruelle-Perron-Frobenius Theorem formalism. We also present an example in the context of weighted shifts (see for instance \cite{MR2533318, MR3759568, BeMe20, MR2919812}).  This will be proved in section  \ref{variational-principle-section}.

On other hand, section \ref{zero-temperature-limit-section} is dedicated to another topic. We propose two different roads to guarantee the existence of maximizing measures. The first one, through the existence of the so-called  ground states for potentials satisfying a condition of decay called in this paper the summability condition. The second one, using the so-called  maximizing property, in which we assume, {\it in some way}, that the potential attains its maximum on a finite dimensional vector subspace of $X$. Our purpose is to be able to set conditions such that the  Ma\~n\'e potential is well-defined.

We assume throughout the section that $X$ is a separable Banach space, $T : X \to X$ is a bounded linear operator surjective, but not bijective. We also assume that for each natural number $n$, we get $X = \mathrm{Ker}(T^n) \oplus E_n$,  with $T(E_{n+1}) \subset E_n$, $0 < \dim(\mathrm{Ker}(T)) < +\infty$ and $\sum_{n = 1}^{+\infty}p(T^n)^{-\alpha} < +\infty$.

\subsection{A variational principle}
\label{variational-principle-section}

Theorem \ref{variational-principle} claims existence of an equilibrium state for each $\alpha$-H\"older continuous potential $A$ with summable variations. First note that, by \cite{LMSV19}, it is guaranteed existence of the eigenvalue $\lambda_A > 0$, a strictly positive eigenfunction $\psi_A \in \mathcal{H}_{b, \alpha}(X)$ and a Gibbs state $\mu_A \in \mathcal{P}_T(X)$. The Gibbs state is the natural candidate to be the equilibrium state associated with the potential $A$. The next Lemma is the first step for proving the claim of our first main result.

\begin{lemma}
\label{lemma-variational-principle}
Consider $A \in \mathcal{H}_{\alpha}(X)$ and $T : X \to X$ satisfying the conditions of Theorem \ref{variational-principle}. Then, for any $\mu \in \mathcal{P}_T(X)$ it is satisfied the inequality
\bigskip\noindent
\begin{equation}
\label{vp-equation}
h_\nu(\mu) + \int_X A d\mu \leq \log(\lambda_A) \;.
\end{equation}
\bigskip\noindent
\end{lemma}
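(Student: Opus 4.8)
The plan is to establish the upper bound $h_\nu(\mu) + \int_X A\,d\mu \leq \log(\lambda_A)$ by reducing to the normalized situation and then exploiting the variational/infimum definition of entropy in \eqref{rrt}. First I would recall the associated normalized potential $\overline{A} = A + \log(\psi_A) - \log(\psi_A\circ T) - \log(\lambda_A)$, for which $\mathcal{L}_{\overline{A}}(1) = 1$. The key algebraic observation is that for any $\mu \in \mathcal{P}_T(X)$, the coboundary term $\log(\psi_A) - \log(\psi_A\circ T)$ integrates to zero, since $\mu$ is $T$-invariant and $\psi_A$ is bounded away from $0$ and $\infty$ (so $\log\psi_A \in \mathcal{C}_b(X)$, and its integral equals that of $\log(\psi_A\circ T)$). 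Hence
\[
\int_X \overline{A}\,d\mu = \int_X A\,d\mu - \log(\lambda_A) \;.
\]
Therefore it suffices to prove the \emph{normalized} inequality $h_\nu(\mu) + \int_X \overline{A}\,d\mu \leq 0$, which absorbs $\log(\lambda_A)$ cleanly into the right-hand side.

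Next I would attack the normalized inequality using the defining infimum in \eqref{rrt}. Since $h_\nu(\mu)$ is an infimum over all $u \in \mathcal{C}_b(X)$, $u>0$, of $\int_X \log(\mathcal{L}_0(u)/u)\,d\mu$, I want to choose a convenient test function $u$ (or a sequence approximating one) related to the normalized data so that $\log(\mathcal{L}_0(u)/u) + \overline{A}$ becomes controllable. The natural move is to relate $\mathcal{L}_0$ to $\mathcal{L}_{\overline{A}}$ via the pointwise identity
\[
\mathcal{L}_0(u)(x) = \int_{z\in\mathrm{Ker}(T)} u(z+v)\,d\nu(z), \qquad
\mathcal{L}_{\overline{A}}(u)(x) = \int_{z\in\mathrm{Ker}(T)} e^{\overline{A}(z+v)} u(z+v)\,d\nu(z),
\]
and then to invoke Jensen's inequality with respect to the probability-like weights $e^{\overline{A}(z+v)}d\nu(z)$ (these integrate to $1$ in $z$ precisely because $\mathcal{L}_{\overline{A}}(1)=1$). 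Concretely, for a test function $u$ I would bound $\int_X \overline{A}\,d\mu$ from above by $\int_X \log\!\bigl(\mathcal{L}_{\overline{A}}(u)/u\bigr)\,d\mu$ using the concavity of $\log$ applied to the conditional (normalized) expectation of $u$ against these weights, and then combine this with the entropy term by observing $\mathcal{L}_{\overline{A}}(u) = \mathcal{L}_0(e^{\overline{A}} u\,\cdot\,)$-type relations so that the two logarithmic contributions telescope against each other and against the $T$-invariance of $\mu$.

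The main obstacle I anticipate is the careful handling of the Jensen step together with the unboundedness of $\overline{A}$ from below (inherited from the lack of lower boundedness of $A$) and the non-compactness of $X$. Because $\overline{A}$ need not be bounded below, I must ensure all integrals appearing are well-defined and that the inequality $\int_X \overline{A}\,d\mu \le \int_X \log(\mathcal{L}_{\overline{A}}(u)/u)\,d\mu$ does not degenerate into $-\infty \le -\infty$; the summable-variations hypothesis and $A \in \mathcal{H}_\alpha(X)$ with $A$ bounded above are exactly what keep $\mathcal{L}_{\overline{A}}(u)$ finite and positive for $u \in \mathcal{C}_b(X)$, $u>0$. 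I would therefore verify integrability explicitly and, if needed, pass to the infimum over $u$ at the very end so that the bound $h_\nu(\mu) + \int_X \overline{A}\,d\mu \le 0$ follows by taking the infimum on the right and using that the test-function inequality holds uniformly. Translating back through $\int_X\overline{A}\,d\mu = \int_X A\,d\mu - \log(\lambda_A)$ then yields \eqref{vp-equation}. This argument mirrors the classical shift-map proof of one half of the variational principle, so once the Jensen inequality and integrability bookkeeping are in place the conclusion is routine.
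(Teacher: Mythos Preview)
Your plan is correct in spirit but considerably more elaborate than the paper's argument, and the Jensen step you emphasize is a detour rather than the crux. The paper's proof is a single substitution: it plugs the test function $u = e^{A}\psi_A$ directly into the infimum \eqref{rrt} defining $h_\nu(\mu)$, uses the identity $\mathcal{L}_0(e^{A}\psi_A) = \mathcal{L}_A(\psi_A) = \lambda_A\psi_A$, and reads off
\[
h_\nu(\mu) + \int_X A\,d\mu \;\le\; \int_X \log\Bigl(\frac{\mathcal{L}_0(e^{A}\psi_A)}{e^{A}\psi_A}\Bigr)d\mu + \int_X A\,d\mu
= \int_X \log\Bigl(\frac{\lambda_A\psi_A}{\psi_A}\Bigr)d\mu = \log(\lambda_A).
\]
No normalization and no convexity argument are used.

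Your normalization step is valid (the coboundary integrates to zero by $T$-invariance and boundedness of $\log\psi_A$), but once you have reduced to showing $h_\nu(\mu)+\int_X\overline{A}\,d\mu\le 0$, the same one-line trick applies: take $u=e^{\overline{A}}$ in \eqref{rrt}, so that $\mathcal{L}_0(e^{\overline{A}})=\mathcal{L}_{\overline{A}}(1)=1$ and hence $h_\nu(\mu)\le \int_X\log(1/e^{\overline{A}})\,d\mu=-\int_X\overline{A}\,d\mu$. The Jensen inequality you sketch (``bound $\int_X\overline{A}\,d\mu$ above by $\int_X\log(\mathcal{L}_{\overline{A}}(u)/u)\,d\mu$'') is not needed and, as stated for a generic $u$, is not obviously true; if you try $u\equiv 1$ it becomes exactly the inequality you are trying to prove. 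What buys you the bound is not concavity of $\log$ but the eigenfunction equation, which singles out the right test function. Your integrability concerns are legitimate but resolve immediately once you see that $\log(\mathcal{L}_0(u)/u)=\log\lambda_A - A$ for this choice of $u$, which is bounded below since $A$ is bounded above.
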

\begin{proof}
Assume that $\mu \in \mathcal{P}_T(X)$. Then,
\begin{align*}
h_\nu(\mu) + \int_X A d\mu
&= \inf\Bigl\{ \int_X \log\Bigl(\frac{\mathcal{L}_0(u)}{u}\Bigr)d \mu : u \in \mathcal{C}_b^+(X) \Bigr\} + \int_X A d\mu  \\
&\leq \int_X \log\Bigl(\frac{\mathcal{L}_0(e^A\psi_A)}{\psi_A}\Bigr)d \mu 
= \log(\lambda_A)  \;.
\end{align*}
\end{proof}

In order to prove our first main result, we will  show that the supremum in the expression in the left-hand of \eqref{vp-equation} is attained by the so-called  Gibbs state obtained in Theorem $1$ of \cite{LMSV19}.

\begin{proof}[Proof of Theorem \ref{variational-principle}]
By Lemma \ref{lemma-variational-principle}, taking supremum on the set of all the $T$-invariant measures, it follows that
\begin{equation}
\label{vp-1}
P_\nu(A) = \sup\Bigl\{ h_\nu(\mu) + \int_X A d\mu : \mu \in \mathcal{P}_T(X) \Bigr\} \leq \log(\lambda_A) \;.
\end{equation}

On other hand, choosing $u_A = e^{\overline{A}}$, it follows that
\[
\int_X \log\Bigl(\frac{\mathcal{L}_0(u_A)}{u_A}\Bigr) d\mu_A = -\int_X \overline{A} d\mu_A \;.
\]

Furthermore, any $u \in \mathcal{C}_b^+(X)$ can be expressed as  $u = ve^{\overline{A}}$, for some $v \in \mathcal{C}_b^+(X)$, which implies that
\[
\int_X \log\Bigl(\frac{\mathcal{L}_0(u)}{u}\Bigr) d\mu = \int_X \log\Bigl(\frac{\mathcal{L}_{\overline{A}}(v)}{v}\Bigr) d\mu_A - \int_X \overline{A} d\mu_A \geq - \int_X \overline{A} d\mu_A \;.
\]

That is,
\begin{equation}
\label{vp-2}
h_\nu(\mu_A) = - \int_X \overline{A} d\mu_A = - \int_X A d\mu_A + \log(\lambda_A) \;.
\end{equation}

Therefore, since $\mu_A \in \mathcal{P}(X)$, by \eqref{vp-1} and \eqref{vp-2}, it follows that
\[
h_\nu(\mu_A) + \int_X A d\mu_A = \log(\lambda_A) = \sup\Bigl\{ h_\nu(\mu) + \int_X A d\mu : \mu \in \mathcal{P}_T(X) \Bigr\} := P(A) \,.
\]

\end{proof}

\begin{remark}
Given a potential $A$, the above variational principle only requires the existence of a well-defined Ruelle operator $\mathcal{L}_A$ acting on the space of bounded continuous function and the existence of a function
$\psi_A $ satisfying $\mathcal{L}_A(\psi_A) = \lambda_A\psi_A$.
\end{remark}

\subsection{Accumulation points at zero temperature}
\label{zero-temperature-limit-section}

In this section we present the proof of Theorem \ref{limit-theorem} which is the second main result of our paper. Our strategy is to adapt a classical result used for finding accumulation points of families of probability measures defined on Banach spaces satisfying suitable hypothesis. Such result is widely known in the mathematical literature as the Prohorov's method for characteristic functionals (see for instance \cite{Aco70, Ara78, Pro61}). In order to apply this result to our setting, it is necessary to define an adequate family of sets with large enough mass. More specifically, we need to find suitable sets, depending on the set of linear functionals defined on the Banach space, with measure close to one for all the members of the family $(\mu_{tA})_{t > 1}$ (this implies the so-called tightness condition). Furthermore, the above guarantee that the family of equilibrium states $(\mu_{tA})_{t > 1}$ has an accumulation point at $\infty$ as a consequence of the Prohorov's Theorem \cite{MR1700749, Pro56}.

Throughout this section we assume that the space $X$ is equipped with a fixed Schauder basis $(e_k)_{k \geq 1}$. That is, for any $x \in X$ there is a unique sequence of real numbers $(\alpha_k)_{k \geq 1}$ satisfying  equation \eqref{Schauder-basis}. Furthermore, we assume  that for each $k \in \mathbb{N}$ it is satisfied the equality $\alpha_k = \pi_k(x)$, where $(\pi_k)_{k \geq 1}$ is the corresponding basis of coordinate functions for the dual space $X'$.

Consider a bijective sequence of natural numbers $(m_i)_{i \geq 1}$. Given $k \in \mathbb{N}$, we define
\begin{equation}
\label{cylindrical-set-i}
\Lambda_k := \{\pi_{m_1}, ... , \pi_{m_k}\} \subset X' \;.
\end{equation}

Now, we define the function $\Pi_{\Lambda_k} : X \to \mathbb{R}^k$ assigning to each point $x \in X$ the value 
\[
\Pi_{\Lambda_k}(x) := (\pi_{m_1}(x), ... , \pi_{m_k}(x)) \;.
\] 

Given any Borel set of the form $B_k = \prod_{i=1}^k [a_i, b_i] \subset \mathbb{R}^k$, where $(a_i)_{i \geq 1}$ is a strictly decreasing sequence of real numbers and $(b_i)_{i \geq 1}$ is a strictly increasing sequence of real numbers with $a_1 < b_1$. We say that $C_{\Lambda_k}(B_k)$ is a cylindrical set generated by $B_k$ and $\Lambda_k$, if it is of the form
\begin{equation}
\label{cylindrical-set-ii}
C_{\Lambda_k}(B_k) := \Pi_{\Lambda_k}^{-1}(B_k) \subset X \;.
\end{equation}

We point out that the definition of cylindrical sets given above is a    particular case of the one that appears on  page  pp. $406$ in \cite{Pro61}. It is easy to check that the set defined by
\[
\bigcap_{i = 1}^k \{x \in X :\; a_i \leq \pi_{m_i}(x) \leq b_i\},
\]
results in a cylindrical set. The above, because
\[
\bigcap_{i = 1}^k \{x \in X :\; a_i \leq \pi_{m_i}(x) \leq b_i\} = \Pi_{\Lambda_k}^{-1}\Bigl( \prod_{i = 1}^k [a_i, b_i] \Bigr).
\]

Since $(\pi_k)_{k \geq 1}$ is a total subset of $X'$, by Lemma $3$ in \cite{Pro61}  (see also \cite{Aco70} and \cite{Ara78} for a probabilistic approach), we can guarantee that a family of Borel probability measures $(\mu_s)_{s \in S}$ on $X$ results in a tight family if, and only if, for each $k \in \mathbb{N}$ and any $\epsilon > 0$ there exists a compact set $\mathbb{K}_{k, \epsilon} \subset X$ and a finite family of linear functionals $\Lambda_k$ (which we choose as defined by \eqref{cylindrical-set-i}), such that, for each $s \in S$ it is satisfied the inequality
\begin{equation}
\label{tight-cylindrical-sets}
\mu_s(\Pi_{\Lambda_k}^{-1}(\Pi_{\Lambda_k}(\mathbb{K}_{k, \epsilon}))) > 1 - \epsilon \;.
\end{equation}

In the following Lemma, we show that any cylindrical set  of the form that appears in \eqref{cylindrical-set-ii} satisfies a kind of upper Gibbs inequality. This will be  one of the main tools to prove the existence of accumulation points for the family of equilibrium states. 

\begin{lemma}
\label{Gibbs-property}
Consider a set $\Lambda_k$ as defined in \eqref{cylindrical-set-i} and bounded above potential $A \in \mathcal{H}_\alpha(X) \cap \mathcal{SV}_T(X)$. Also assume that $\mathrm{Ker}(T^n) = \mathrm{span}\{e_{m_1}, ... , e_{m_n}\}$ for all $n \in \mathbb{N}$. Then, for each $\widetilde{x}$ belonging to the cylindrical set $C_{\Lambda_k}(B_k)$ satisfying \eqref{cylindrical-set-ii}, we have the following inequality
\begin{equation}
\label{Gibbs}
\frac{\mu_A(C_{\Lambda_k}(B_k))}{e^{S_k A(\widetilde{x}) - k\log(\lambda_A)}} \leq C \;, 
\end{equation}
where the constant $C$ is given by $C = e^{(1 + 3\kappa_A)V_T(A) + \mathrm{Hol}^{\alpha}_A \mathrm{diam}(B_k) \sum_{i=1}^k \|T\|_{op}^{i-1}}$, with $\| \cdot \|_{op}$ the operator norm.
\end{lemma}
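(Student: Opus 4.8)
The plan is to exploit that the Gibbs state $\mu_A$ is the fixed point of the dual of the normalized operator, i.e. $\mathcal{L}_{\overline{A}}^{*}\mu_A = \mu_A$ with $\overline{A} = A + \log(\psi_A) - \log(\psi_A\circ T) - \log(\lambda_A)$. Iterating this identity $k$ times, and extending it from $\mathcal{C}_b(X)$ to bounded Borel functions (in particular to the indicator $\mathbf{1}_{C}$ of $C := C_{\Lambda_k}(B_k)$) by monotone approximation, gives $\mu_A(C) = \int_X \mathcal{L}_{\overline{A}}^{k}(\mathbf{1}_{C})\, d\mu_A$. I would then expand the iterate as an integral over the fiber $T^{-k}(x) = \mathrm{Ker}(T^k) + v_k$, writing each preimage as $z_k + v_k$ with $z_k \in \mathrm{Ker}(T^k)$ and $v_k \in E_k$ the distinguished representative; this produces the ergodic sum $S_k\overline{A}(z_k + v_k)$ integrated against a measure on the fiber which is a probability, since $\nu$ is.

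The decisive structural observation is that the cylindrical constraint depends only on the integration variable. Because $\mathrm{Ker}(T^k) = \mathrm{span}\{e_{m_1}, \dots, e_{m_k}\}$ while $C$ is cut out by the functionals $\pi_{m_1}, \dots, \pi_{m_k}$, choosing $v_k$ in the closed linear span of $\{e_{m_j} : j > k\}$ forces $\pi_{m_i}(z_k + v_k) = \pi_{m_i}(z_k)$ for $i \le k$. Hence $\mathbf{1}_{C}(z_k + v_k)$ reduces to the condition that the kernel coordinates $(\pi_{m_1}(z_k),\dots,\pi_{m_k}(z_k))$ lie in $B_k$, and in particular the fiber integral of $\mathbf{1}_C$ is at most $1$. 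This is exactly what decouples the geometry of $C$ from the outer point $x$.

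Next I would pass from $\overline{A}$ back to $A$. Telescoping the coboundary terms gives $S_k\overline{A}(w) = S_k A(w) + \log(\psi_A(w)) - \log(\psi_A(T^k w)) - k\log(\lambda_A)$, which produces the factor $e^{-k\log(\lambda_A)}$ together with ratios of $\psi_A$ controlled by the oscillation of $\log(\psi_A)$; this is where $\kappa_A$ (the bound on the variation of $\log(\psi_A)$ in multiples of $V_T(A)$, supplied by the Ruelle theorem of \cite{LMSV19}) enters and, after collecting all such contributions, gives the coefficient of $V_T(A)$ in the exponent. It then remains to compare $S_k A(z_k + v_k)$ with $S_k A(\widetilde{x})$ for the reference point $\widetilde{x} = \widetilde{z}_k + \widetilde{w}_k \in C$. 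I would insert the intermediate point $u := z_k + \widetilde{w}_k$: the difference $S_k A(z_k + v_k) - S_k A(u)$ changes only the $E_{k-j}$-component of each $T^j(\cdot)$ while fixing the kernel part $T^j z_k \in \mathrm{Ker}(T^{k-j})$, so it is bounded termwise by $V_{T,k-j}(A)$ and hence by $\sum_{n=1}^{k} V_{T,n}(A) \le V_T(A)$; the difference $S_k A(u) - S_k A(\widetilde{x})$ changes only the kernel coordinates inside the box, so by $\alpha$-H\"older continuity it is bounded termwise by a multiple of $\|T\|_{op}^{\,j}\,\mathrm{diam}(B_k)$, and summing over $j = 0,\dots,k-1$ yields $\mathrm{Hol}^{\alpha}_{A}\,\mathrm{diam}(B_k)\sum_{i=1}^{k}\|T\|_{op}^{i-1}$. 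Exponentiating, using the fiber bound $\le 1$ and $\mu_A(X) = 1$, delivers \eqref{Gibbs}.

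The main obstacle is the bookkeeping in this two-step ergodic-sum comparison: one must verify, level by level under iteration, that $T^j(z_k + v_k) = T^j z_k + T^j v_k$ genuinely splits with kernel part in $\mathrm{Ker}(T^{k-j})$ and complementary part in $E_{k-j}$, so that $V_{T,k-j}$ is the correct modulus to invoke, and that the identification of $\mathrm{Ker}(T^k)$ with $e_{m_1},\dots,e_{m_k}$ is precisely what makes the cylindrical indicator depend only on the integration variable. The estimate on the $\psi_A$-ratios — and hence the exact multiple of $\kappa_A$ — is routine once the oscillation bound for $\log(\psi_A)$ from \cite{LMSV19} is available; I would treat it as a black box and absorb its contributions into the stated constant, which is in any case an upper bound.
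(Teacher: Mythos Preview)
Your proposal is correct and follows essentially the same route as the paper: both iterate the dual of the normalized operator $k$ times on (an approximation of) the indicator of $C_{\Lambda_k}(B_k)$, compare the ergodic sum $S_k A$ on the fiber to $S_k A(\widetilde{x})$ via a two-step triangle inequality that separately moves the $\mathrm{Ker}(T^k)$-component (controlled by the H\"older constant and $\mathrm{diam}(B_k)$) and the $E_k$-component (controlled by $V_T(A)$), and handle the passage between $A$ and $\overline{A}$ through the oscillation bound $\kappa_A V_T(A)$ for $\log\psi_A$. The only cosmetic differences are that the paper first treats the normalized case and then reduces to it (you work with $\overline{A}$ throughout), the paper uses a $\delta$-shrinking continuous sandwich $\phi$ in place of your monotone approximation of $\mathbf{1}_C$, and the paper's intermediate point is $\widetilde{z}_k + v_k$ rather than your $z_k + \widetilde{w}_k$, which merely swaps the order of the two comparison steps.
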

\begin{proof}

In order to simplify our reasoning, we assume first that $A$ is a normalized potential. The above assumption guarantees that $\mathcal{L}_A(1) = 1$ and $\mathcal{L}^*_A(\mu_A) = \mu_A$. Since $a_1 < b_1$, it follows that $a_i < b_i$ for each $i \in \mathbb{N}$. Using the above, given $\delta > 0$ such that $a_1 + \delta < b_1 - \delta$, we are able to define the set $B_k^\delta := \prod_{i=1}^k [a_i + \delta, b_i - \delta]$. 

On other hand, since $C_{\Lambda_k}(B_k^\delta)$ and $C_{\Lambda_k}(B_k)$ are a closed subsets of $X$, there exists a function $\phi \in \mathcal{C}(X)$, such that, 
\[
{\bf 1}_{C_{\Lambda_k}(B_k^\delta)} \leq \phi \leq {\bf 1}_{C_{\Lambda_k}(B_k)} \;.
\]

By \eqref{n-sum-decomposition} (which is satisfied because $\mathrm{Ker}(T^n) = \mathrm{span}\{e_{m_1}, ... , e_{m_n}\}$), it follows that for any $\widetilde{x} \in C_{\Lambda_k}(B_k)$ we have a decomposition $\widetilde{x} = \widetilde{z}_k + \widetilde{v}_k$, with $\widetilde{z}_k \in \mathrm{Ker}(T^k)$ and $\widetilde{v}_k \in E_k$. Furthermore, since $T(E_{n+1}) \subset E_n$ for each natural number $n$, we obtain that $T^i(\widetilde{x}_k) \in E_{k-i}$ and $T^i(\widetilde{z}_k) \in \mathrm{Ker}(T^{k-i})$ for each $i \in \{1, ... , k-1\}$. 

Then, for any $x \in X$ and each $z_k + v_k \in T^{-k}(x) \cap C_{\Lambda_k}(B_k)$, with $z_k \in \mathrm{Ker}(T^k)$ and $v_k \in E_k$, the following is satisfied
\begin{align*}
&S_k A(z_k + v_k) \\ 
&= S_k A(z_k + v_k) - S_k A(\widetilde{z}_k + v_k) + S_k A(\widetilde{z}_k + v_k) - S_k A(\widetilde{x}) + S_k A(\widetilde{x}) \\
&\leq S_k A(\widetilde{x}) + |S_k A(\widetilde{z}_k + v_k) - S_k A(\widetilde{z}_k + \widetilde{v}_k)| + |S_k A(z_k + v_k) - S_k A(\widetilde{z}_k + v_k)| \\ 
&\leq S_k A(\widetilde{x}) + \sum_{i=1}^k V_{T, i}(A) + \mathrm{Hol}^{\alpha}_A \Bigl(\sum_{i=1}^k \|T\|_{op}^{i-1} \|z_k - \widetilde{z}_k\|_X \Bigr) \\
&\leq S_k A(\widetilde{x}) + \sum_{i=1}^k V_{T, i}(A) + \mathrm{Hol}^{\alpha}_A \mathrm{diam}(B_k) \sum_{i=1}^k \|T\|_{op}^{i-1} \\
&\leq S_k A(\widetilde{x}) + V_T(A) + \mathrm{Hol}^{\alpha}_A \mathrm{diam}(B_k) \sum_{i=1}^k \|T\|_{op}^{i-1} \;,
\end{align*}
where $V_T (A)$ is given by \eqref{summable-variation} and $V_{T, i}(A)$ is given by \eqref{summable-variation1}. By the above, it follows that
\begin{align*}
&\mu_A(C_{\Lambda_k}(B_k^\delta))  \\
&= \int_X {\bf 1}_{C_{\Lambda_k}(B_k^\delta)} d\mu_A  \\
&\leq \int_X \phi d\mu_A  \\
&= \int_X \mathcal{L}_A^k(\phi) d\mu_A  \\
&= \int_X \int_{z_k \in \mathrm{Ker}(T^k)} ... \int_{z_1 \in \mathrm{Ker}(T)} e^{S_k A(z_k + v_k)}\phi(z_k + v_k) d\nu(z_1) ... d\nu(z_k) d\mu_A(x)  \\
&\leq \int_X \int_{z_k \in \mathrm{Ker}(T^k)} ... \int_{z_1 \in \mathrm{Ker}(T)} e^{S_k A(z_k + v_k)}{\bf 1}_{C_{\Lambda_k}(B_k)}(z_k + v_k) d\nu(z_1) ... d\nu(z_k) d\mu_A(x)  \\
&\leq e^{S_k A(\widetilde{x}) + V_T(A) + \mathrm{Hol}^{\alpha}_A \mathrm{diam}(B_k) \sum_{i=1}^k \|T\|_{op}^{i-1}} \int_X \int_{\mathrm{Ker}(T^k)} ... \int_{\mathrm{Ker}(T)} d\nu^k d\mu_A  \\
&\leq e^{S_k A(\widetilde{x}) + V_T(A) + \mathrm{Hol}^{\alpha}_A \mathrm{diam}(B_k) \sum_{i=1}^k \|T\|_{op}^{i-1}} \;.
\end{align*}

Therefore, using that the value $\delta > 0$ is arbitrary, we obtain that
\begin{equation}
\label{Gibbs-1}
\frac{\mu_A(C_{\Lambda_k}(B_k))}{e^{S_k A(\widetilde{x})}} \leq e^{V_T(A) + \mathrm{Hol}^{\alpha}_A \mathrm{diam}(B_k) \sum_{i=1}^k \|T\|_{op}^{i-1}}.
\end{equation}

Now, assuming that $A \in \mathcal{H}_\alpha(X) \cap \mathcal{SV}_T(X)$ is not a normalized potential, we consider its corresponding normalization
\[
\overline{A} = A + \log(\psi_A) - \log(\psi_A \circ T) - \log(\lambda_A),
\]
where the potential $\psi_A \in \mathcal{H}_{b, \alpha}(X)$ and the value $\lambda_A > 0$ satisfy the equation $\mathcal{L}_A(\psi_A) = \lambda_A \psi_A$. It is easy to check that $\overline{A} \in \mathcal{H}_\alpha(X) \cap \mathcal{SV}_T(X)$, $\mathcal{L}_{\overline{A}}(1) = 1$ and $\mu_A = \mu_{\overline{A}}$ (see for instance \cite{LMSV19}). 

Note that for any $k \in \mathbb{N}$ and each $\widetilde{x} \in C_{\Lambda_k}(B_k)$ 
\[
S_k \overline{A}(\widetilde{x}) = S_k A(\widetilde{x}) + \log(\psi_A(\widetilde{x})) - \log(\psi_A(T^k(\widetilde{x}))) - k\log(\lambda_A) \;.
\] 

As $\mathcal{L}_A(\psi_A) = \lambda_A \psi_A$, it is not difficult to check that for any $x, y \in X$ it is satisfied the inequality
$$
|\log(\psi_A)(x) - \log(\psi_A)(y)| \leq \kappa_A V_T(A) \;,
$$ 
where $\kappa_A > 0$ is a constant depending exclusively of the potential $A$. The above implies both, that $\sup(\log(\psi_A)) - \inf(\log(\psi_A)) \leq \kappa_A V_T(A)$ and that $V_T(\overline{A}) \leq (1 + 2\kappa_A)V_T(A)$. Thus, from \eqref{Gibbs-1}, it follows immediately that
\begin{align*}
\frac{\mu_A(C_{\Lambda_k}(B_k))}{e^{S_k A(\widetilde{x}) - k\log(\lambda_A)}} 
&\leq e^{V_T(\overline{A}) + \log(\psi_A)(\widetilde{x}) - \log(\psi_A)(T^k(\widetilde{x})) + \mathrm{Hol}^{\alpha}_A \mathrm{diam}(B_k) \sum_{i=1}^k \|T\|_{op}^{i-1}} \\
&\leq e^{V_T(\overline{A}) + \kappa_A V_T(A) + \mathrm{Hol}^{\alpha}_A \mathrm{diam}(B_k) \sum_{i=1}^k \|T\|_{op}^{i-1}} \\
&\leq e^{(1 + 3\kappa_A)V_T(A) + \mathrm{Hol}^{\alpha}_A \mathrm{diam}(B_k) \sum_{i=1}^k \|T\|_{op}^{i-1}} \;.
\end{align*}

Hence, choosing $C = e^{(1 + 3\kappa_A)V_T(A) + \mathrm{Hol}^{\alpha}_A \mathrm{diam}(B_k) \sum_{i=1}^k \|T\|_{op}^{i-1}}$, we get \eqref{Gibbs}, and the proof is finished.
\end{proof}

\begin{remark}
Note that Lemma \ref{Gibbs-property} guarantees that for any set $X_{m_1, j}$, with $j \in \mathbb{N}$ (of the form described by Definition \ref{summability-definition}), we have
\begin{equation}
\frac{\mu_A(X_{m_1, j})}{e^{A(\widetilde{x}) - \log(\lambda_A)}} \leq e^{(1 + 3\kappa_A)V_T(A) + 2\mathrm{Hol}^{\alpha}_A} \;.
\end{equation}

The above is true  because $X_{i, j} = \pi_i^{-1}([-(j+1), -j] \cup [j, (j+1)])$. Moreover, since there is $l \in \mathbb{N}$ such that $T^l(e_i) = e_{m_1}$, by $T$-invariance of $\mu_A$, we have $\mu_A(X_{i, j}) = \mu_A(X_{m_1, j})$ for all $i, j \in \mathbb{N}$. This inequality will be of great importance in the proof of Theorem \ref{limit-theorem}.
\end{remark}

Take $\Lambda_k$  as in \eqref{cylindrical-set-i}. Now, we are able to present a collection of compact sets $\{\mathbb{K}_{k, \epsilon} : \epsilon \in (0, 1)\}$ satisfying \eqref{tight-cylindrical-sets} for the family of equilibrium states $(\mu_{tA})_{t > 1}$. Given $k \in \mathbb{N}$ and a sequence of natural numbers $(n_i)_{i \geq 1}$ such that for each $l \in \mathbb{N}$ is satisfied $n_{m_l} < n_{m_{l+1}}$. We define the sets
\[
X_{\Lambda_k} := \bigl\{ x  \in X :\; \pi_l(x) = 0 ,\; l \notin \{m_1, ... , m_k\} \bigr\}
\]
and
\begin{equation}
\label{compact-set}
\mathbb{K}_k := \Bigl\{ x \in X :\; |\pi_{m_i}(x)| \leq n_{m_i} ,\; 1 \leq i \leq k \Bigr\} \cap X_{\Lambda_k} \subset X \;.
\end{equation}

Therefore, the set $\mathbb{K}_k$ is a non-empty compact subset of the Banach space $X$. Furthermore, it is not difficult to check that
\[
\mathbb{K}_k = \Pi_{\Lambda_k}^{-1}\Bigl( \prod_{i=1}^k[-n_{m_i}, n_{m_i}] \Bigr) \cap X_{\Lambda_k} \;.
\]

Now, we are able to present the proof of the second main result of this paper.

\begin{proof}[Proof of Theorem \ref{limit-theorem}]
First note that given $k \in \mathbb{N}$, $\Lambda_k = \{\pi_{m_1}, ... , \pi_{m_k}\}$, and $Y \subset X$, we have
\begin{align*}
\Pi_{\Lambda_k}(Y)
&= \pi_{m_1}(Y) \times \cdots \times \pi_{m_k}(Y) \\
&= \pi_{m_1}(Y \cap X_{\Lambda_k}) \times \cdots \times \pi_{m_k}(Y \cap X_{\Lambda_k}) \\
&= \Pi_{\Lambda_k}(Y \cap X_{\Lambda_k}) \;.
\end{align*}

Then, for each $k \in \mathbb{N}$, the set $\mathbb{K}_k$ defined in \eqref{compact-set} satisfies
\begin{align*}
\Pi_{\Lambda_k}^{-1}\Bigl(\Pi_{\Lambda_k}(\mathbb{K}_k)\Bigr)
&= \Pi_{\Lambda_k}^{-1}\Bigl(\Pi_{\Lambda_k}\Bigl( \Pi_{\Lambda_k}^{-1}\Bigl( \prod_{i=1}^k[-n_{m_i}, n_{m_i}] \Bigr) \cap X_{\Lambda_k} \Bigr)\Bigr) \\
&= \Pi_{\Lambda_k}^{-1}\Bigl(\Pi_{\Lambda_k}\Bigl( \Pi_{\Lambda_k}^{-1}\Bigl( \prod_{i=1}^k[-n_{m_i}, n_{m_i}] \Bigr) \Bigr)\Bigr) \\
&= \Pi_{\Lambda_k}^{-1}\Bigl(\prod_{i=1}^k[-n_{m_i}, n_{m_i}] \Bigr) \\
&\supset \bigcap_{i = 1}^{+\infty} \pi_i^{-1}([-n_i, n_i]) \;.
\end{align*}

Therefore, for each $t > 1$, we have
\begin{align*}
\mu_{tA}\Bigl( \Pi_{\Lambda_k}^{-1}\Bigl(\Pi_{\Lambda_k}(\mathbb{K}_k)\Bigr) \Bigr)
&\geq \mu_{tA} \Bigl( \bigcap_{i = 1}^{+\infty} \pi_i^{-1}([-n_i, n_i]) \Bigr) \\
&\geq 1 - \sum_{i = 1}^{+\infty}\mu_{tA}(\{x \in X :\; |\alpha_i| \geq n_i \}) \\
&\geq 1 - \sum_{i = 1}^{+\infty}\mu_{tA}\Bigl(\bigcup_{j = n_i}^{+\infty} \{x \in X :\; j \leq |\alpha_i| \leq j + 1\}\Bigr) \\
&\geq 1 - \sum_{i = 1}^{+\infty}\sum_{j = n_i}^{+\infty}\mu_{tA}(X_{i,j}) \\
&= 1 - \sum_{i = 1}^{+\infty}\sum_{j = n_i}^{+\infty}\mu_{tA}(X_{m_1,j}) \;.
\end{align*}

Thus, in order to define each set $\mathbb{K}_{k, \epsilon}$, with $\epsilon \in (0, 1)$, it is enough to guarantee the existence of a strictly increasing sequence natural numbers $(n^{\epsilon}_i)_{i \geq 1}$, such that,
\[
\sum_{j = n^{\epsilon}_i}^{+\infty}\mu_{tA}(X_{m_1,j}) < \frac{\epsilon}{2^i} \;.
\]

This is so because in this case
\begin{equation}
\label{tightness}
\mu_{tA}\Bigl( \Pi_{\Lambda_k}^{-1}\Bigl(\Pi_{\Lambda_k}(\mathbb{K}_{k, \epsilon})\Bigr) \Bigr) \geq 1 - \sum_{i = 1}^{+\infty}\sum_{j = n^{\epsilon}_i}^{+\infty}\mu_{tA}(X_{m_1,j}) > 1 - \sum_{i = 1}^{+\infty} \frac{\epsilon}{2^i} = 1 - \epsilon \;.
\end{equation}

Consider the potential $B \equiv 0$ which belongs to the set $\mathcal{H}_\alpha(X) \cap \mathcal{SV}_T(X)$. It is easy to check that $\mathcal{L}_B(1) = 1$, and this implies that $\log(\lambda_B) = 0$. Then, by Theorem \ref{variational-principle}, it follows that
\[
0 = h_\nu(\mu_B) \;.
\]

Now, define
\[
I := \int_X A d\mu_B \leq \sup(A) < +\infty \;.
\]

By the above, Theorem \ref{variational-principle} and the fact that $\mu_{tA} = \mu_{t(A - I)}$, it follows that
\begin{align*}
\log(\lambda_{tA}) - tI
&= h_\nu(\mu_{tA}) + t\int_X (A - I) d\mu_{tA}  \\
&= h_\nu(\mu_{t(A - I)}) + t\int_X (A - I) d\mu_{t(A - I)}  \\
&\geq h_\nu(\mu_B) + t\int_X (A - I) d\mu_B  \\
&= h_\nu(\mu_B) = 0  \;.
\end{align*}

Therefore, by \eqref{Gibbs}, for any pair $j \in \mathbb{N}$ and all $x \in X_{m_1,j}$ is satisfied
\begin{align*}
\mu_{tA}(X_{m_1,j})
&\leq e^{tA(x) - \log(\lambda_{tA}) + (1 + 3\kappa_A)V_T(tA) + 2\mathrm{Hol}^{\alpha}_{tA}}  \\
&= e^{t(A(x) - I) - (\log(\lambda_{tA}) - tI) + t(1 + 3\kappa_A)V_T(A) + 2t\mathrm{Hol}^{\alpha}_A}  \\
&\leq e^{t(A(x) - I + (1 + 3\kappa_A)V_T(A) + 2\mathrm{Hol}^{\alpha}_A)}  \\
&\leq e^{t(\sup\{A(x) :\; x \in X_{m_1,j} \} - I + (1 + 3\kappa_A)V_T(A) + 2\mathrm{Hol}^{\alpha}_A)}  \;.
\end{align*}

Now, by \eqref{summability-condition}, it follows that   
\[
\lim_{j \to +\infty} \sup\{A(x) :\; x \in X_{m_1,j} \} = -\infty \;.
\]

The above implies the existence of $j_0 \in \mathbb{N}$, such that, for any $j \geq j_0$ we have
\[
\sup\{A(x) :\; x \in X_{m_1,j} \} - I + (1 + 3\kappa_A)V_T(A) + 2\mathrm{Hol}^{\alpha}_A < 0 \;,
\]
which implies that
\begin{equation}
\label{Gibbs-2}
\mu_{tA}(X_{m_1,j}) \leq e^{\sup\{A(x) :\; x \in X_{m_1,j} \} - I + (1 + 3\kappa_A)V_T(A) + 2\mathrm{Hol}^{\alpha}_A} \;.
\end{equation}

Besides that, also by \eqref{summability-condition}, given $\epsilon > 0$, we can find $n_i^{\epsilon} \geq j_0$, such that
\begin{equation}
\label{Gibbs-3}
\sum_{j = n_i^{\epsilon}}^{+\infty} e^{\sup\{A(x) :\; x \in X_{m_1,j} \}} < \frac{\epsilon}{2^i}e^{I - (1 + 3\kappa_A)V_T(A) - 2\mathrm{Hol}^{\alpha}_A} \,.
\end{equation}

Observe that \eqref{Gibbs-2} and \eqref{Gibbs-3} implies \eqref{tightness}, and from this it follows that for any $t > 1$, we have
\[
\mu_{tA}\Bigl( \Pi_{\Lambda_k}^{-1}\Bigl(\Pi_{\Lambda_k}(\mathbb{K}_{k, \epsilon})\Bigr) \Bigr) > 1 - \epsilon \;.
\]

From the above reasoning it follows  that the family of equilibrium states $(\mu_{tA})_{t > 1}$ is tight (for more  details see Lemma $3$ in \cite{Pro61}). Therefore, it follows from the Prohorov's Theorem the existence of a sequence $(t_n)_{n \geq 1}$, such that, the sequence of equilibrium states $(\mu_{t_n A})_{n \geq 1}$ is convergent, with a limit denoted by  $\mu_\infty$ (see for instance \cite{MR1700749, Pro56}).

It is important to point out that the limits of different subsequences do not have necessarily to be the same.
\end{proof}

\begin{remark}
In the case that $\mathrm{Ker}(T^n) = \mathrm{span}\{e_1, ... , e_n\}$ the condition  \eqref{summability-condition} can be replaced by the condition
\[
\sum_{n = 1}^{+\infty} e^{\sup\{A(x) :\; x \in X_{1,n}\}} < +\infty \,.
\]

Note that in this case, the summability only depends on the behavior of the first component of $x$ with respect to the Schauder basis $(e_k)_{k \geq 1}$. For example, this property is satisfied  in the case of weighted shifts.
\end{remark}

\subsection{Maximizing measures}
\label{maximizing-measures-section}

In this section, we will show the existence of maximizing measures, associated with a certain class of potentials, in the framework of linear dynamical systems on Banach spaces. This will be achieved under two different assumptions. First, we prove that the so-called  ground states, whose existence was obtained in section \ref{zero-temperature-limit-section}, are maximizing measures. This will be proved  for potentials $A$ satisfying the hypothesis of Theorem \ref{limit-theorem}. After that, we prove the existence  of maximizing measures under the assumption that the potential $A$ satisfies the maximizing property (see Definition \ref{maximizing-property}). In these two cases, it will follow (see Lemma \ref{bounded-mean})  that the  Ma\~n\'e potential only takes non-positive  values, which is one of the main requirements  that will be used in the proof of Theorem \ref{sub-action-theorem}.

 Now we want to prove the last claim of Theorem \ref{limit-theorem}. In this direction, the following Lemma guarantees the existence of maximizing measures under the hypotheses  of Theorem \ref{limit-theorem}.

\begin{lemma}
\label{maximizing-ground-states-lemma}
Assume that $\mu_\infty$ is a ground state obtained from the existence claimed by  Theorem \ref{limit-theorem}. Then, we have that $\mu_\infty \in \mathcal{P}_{\max}(A)$.
\end{lemma}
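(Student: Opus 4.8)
The plan is to exploit that each $\mu_{t_n A}$ is an equilibrium state for the scaled potential $t_n A$ (Theorem \ref{variational-principle}), to compare it against an arbitrary competitor $\mu \in \mathcal{P}_T(X)$, divide by $t_n$, and let $n \to \infty$. Writing $\mu_\infty$ as the limit of a subsequence $(\mu_{t_n A})_{n \geq 1}$ with $t_n \to +\infty$, the equilibrium property yields, for every $\mu \in \mathcal{P}_T(X)$,
\[
h_\nu(\mu_{t_n A}) + t_n \int_X A \, d\mu_{t_n A} \geq h_\nu(\mu) + t_n \int_X A \, d\mu \;.
\]
Rearranging gives $t_n\bigl(\int_X A \, d\mu_{t_n A} - \int_X A \, d\mu\bigr) \geq h_\nu(\mu) - h_\nu(\mu_{t_n A})$. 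Since by Remark \ref{neg} the entropy is non-positive, $-h_\nu(\mu_{t_n A}) \geq 0$, so dividing by $t_n > 0$ produces $\int_X A \, d\mu_{t_n A} - \int_X A \, d\mu \geq h_\nu(\mu)/t_n$. Letting $n \to \infty$ and using that $h_\nu(\mu)$ is a fixed quantity (or $-\infty$, in which case the bound is vacuous), I obtain $\liminf_n \int_X A \, d\mu_{t_n A} \geq \int_X A \, d\mu$.

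The remaining and more delicate point is to transfer this $\liminf$ bound to $\mu_\infty$ itself, that is, to show $\int_X A \, d\mu_\infty \geq \limsup_n \int_X A \, d\mu_{t_n A}$. This is where the lack of lower boundedness of $A$ (forced by the summability condition, which only grants $\sup(A) < +\infty$) is the main obstacle, since weak-$*$ convergence does not directly pass through an unbounded-below integrand. I would resolve this through upper semicontinuity: because $A$ is continuous and bounded above, the truncations $A_M := \max(A, -M)$ are bounded continuous and decrease to $A$ as $M \to +\infty$. For each fixed $M$, weak-$*$ convergence gives $\int_X A_M \, d\mu_{t_n A} \to \int_X A_M \, d\mu_\infty$, while $\int_X A \, d\mu_{t_n A} \leq \int_X A_M \, d\mu_{t_n A}$; hence $\limsup_n \int_X A \, d\mu_{t_n A} \leq \int_X A_M \, d\mu_\infty$. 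Letting $M \to +\infty$ and applying monotone convergence for the decreasing sequence $A_M \downarrow A$ then yields $\limsup_n \int_X A \, d\mu_{t_n A} \leq \int_X A \, d\mu_\infty$.

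Combining the two bounds, for every $\mu \in \mathcal{P}_T(X)$,
\[
\int_X A \, d\mu_\infty \geq \limsup_n \int_X A \, d\mu_{t_n A} \geq \liminf_n \int_X A \, d\mu_{t_n A} \geq \int_X A \, d\mu \;.
\]
Taking the supremum over $\mu \in \mathcal{P}_T(X)$ gives $\int_X A \, d\mu_\infty \geq m(A)$. On the other hand, since $\mu_\infty$ is $T$-invariant (the weak-$*$ limit of $T$-invariant probabilities is $T$-invariant, because $T$ is continuous and $\varphi \circ T$ is bounded continuous whenever $\varphi$ is), the reverse inequality $\int_X A \, d\mu_\infty \leq m(A)$ holds by the very definition of $m(A)$. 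Therefore $\int_X A \, d\mu_\infty = m(A)$, which is precisely $\mu_\infty \in \mathcal{P}_{\max}(A)$. The step I expect to demand the most care is the upper semicontinuity argument, as it is the only place where the unboundedness below of $A$ must be controlled, and one must handle the double limit in $n$ and $M$ in the correct order.
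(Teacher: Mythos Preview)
Your argument is correct and follows the same underlying strategy as the paper's proof: use Theorem~\ref{variational-principle} together with the non-positivity of entropy (Remark~\ref{neg}) to bound $\int A\,d\mu_{t_nA}$ from below, then pass to the weak-$*$ limit. The organization differs in two ways. First, the paper invokes the identity $m(A)=\lim_{t\to\infty}\log(\lambda_{tA})/t$ (citing \cite{MR2864625,MR3377291}) and reads off $m(A)\le \lim_n \int A\,d\mu_{t_nA}$ directly from $\log(\lambda_{t_nA})/t_n = h_\nu(\mu_{t_nA})/t_n + \int A\,d\mu_{t_nA}\le \int A\,d\mu_{t_nA}$; you instead compare $\mu_{t_nA}$ against each competitor $\mu$ and take a supremum, which is exactly how one proves that asymptotic identity in the first place, so your route is more self-contained. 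Second, the paper simply writes $\lim_n \int A\,d\mu_{t_nA}=\int A\,d\mu_\infty$ without comment, whereas you supply the truncation argument for the upper-semicontinuity direction $\limsup_n \int A\,d\mu_{t_nA}\le \int A\,d\mu_\infty$; this is a genuine addition, since $A$ is not bounded below and weak-$*$ convergence alone does not immediately give the equality. Your caveat about the case $h_\nu(\mu)=-\infty$ is worth flagging: strictly speaking your supremum then runs only over $\mu$ with finite entropy, and one needs that this restricted supremum still equals $m(A)$---but this is precisely the content of the identity $m(A)=\lim_t \log(\lambda_{tA})/t$ that the paper imports, so neither proof is more exposed on this point.
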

\begin{proof}
The proof of this Lemma follows from Theorem \ref{variational-principle} and the fact that $h_\nu(\mu) \leq 0$, for any $\mu \in \mathcal{P}_T(X)$. Indeed, it is not difficult to check that $m(A) = \lim_{t \to +\infty}\frac{\log(\lambda_{t A})}{t}$ (see for instance \cite{MR2864625, MR3377291}). Therefore, for any $\mu \in \mathcal{P}_\sigma(X)$ we have
\begin{align*}
m(A) 
= \lim_{n \in \mathbb{N}}\frac{\log(\lambda_{t_n A})}{t_n} 
&= \lim_{n \in \mathbb{N}}\frac{h_\nu(\mu_{t_nA})}{t_n} + \int_X A d\mu_{t_nA} \\
&\leq \lim_{n \in \mathbb{N}} \int_X A d\mu_{t_nA} 
= \int_X A d\mu_\infty \leq m(A) \;.
\end{align*}

The above implies that $\mu_\infty \in \mathcal{P}_{\max}(A)$.
\end{proof}

In the next Lemma, we also guarantee the existence of maximizing measures, but in this case, we assume the so-called  maximizing property.

\begin{lemma}
\label{maximizing-property-lemma}
 Consider a potential $A \in \mathcal{H}_{\alpha}(X)$ satisfying the maximizing property and let $Y, Z$ be the subspaces of $X$ satisfying \eqref{max-prop-1} and \eqref{max-prop-2}. If 
$$
Y = \mathrm{span}\{v, ... , T^{k-1}(v)\} \;,
$$ 
for some periodic point $v \in X$ of period $k$, it follows that $\mathcal{P}_{\max}(A) \neq \emptyset$.
\end{lemma}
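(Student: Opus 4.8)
The plan is to use the maximizing property to transport the whole problem onto the finite-dimensional invariant subspace $Y$, where coercivity produces an explicit maximizer supported on a periodic orbit, and then to argue that no invariant probability on $X$ can beat this value. So the non-emptiness of $\mathcal{P}_{\max}(A)$ will be witnessed by a concrete orbit measure.

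First I would record the structure of $T$ on $Y$. Since $T^k(v)=v$ and the vectors $v,T(v),\dots,T^{k-1}(v)$ span $Y$, the operator $T$ maps $Y$ onto itself and $T^k|_Y=\mathrm{id}_Y$; in particular $T|_Y$ is a linear automorphism of the finite-dimensional space $Y$. I then define the orbit average
\[
\bar A(y):=\frac{1}{k}\sum_{j=0}^{k-1}A(T^j(y)),\qquad y\in Y .
\]
This $\bar A$ is continuous, satisfies $\bar A\circ T=\bar A$ on $Y$, and is coercive from above: each $A\circ T^j$ is bounded above and, since $T^j|_Y$ is invertible so that $\|T^j(y)\|_X\to+\infty$ as $\|y\|_X\to+\infty$, \eqref{max-prop-2} forces $A(T^j(y))\to-\infty$, whence $\bar A(y)\to-\infty$ as $\|y\|_X\to+\infty$. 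A continuous function on a finite-dimensional space that tends to $-\infty$ at infinity attains its maximum, say at $y^\ast\in Y$; set $m_Y:=\bar A(y^\ast)$. The uniform measure on the orbit of $y^\ast$,
\[
\mu^\ast:=\frac{1}{k}\sum_{j=0}^{k-1}\delta_{T^j(y^\ast)},
\]
is $T$-invariant, is supported in $Y\subset X$, and satisfies $\int_X A\,d\mu^\ast=\bar A(y^\ast)=m_Y$. This already gives $m_Y\le m(A)$ and exhibits a concrete candidate maximizer.

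The heart of the matter is the reverse inequality $m(A)\le m_Y$. Let $P_Y:X\to Y$ be the projection onto $Y$ along $Z$. For an arbitrary $\mu\in\mathcal{P}_T(X)$, inequality \eqref{max-prop-1} gives
\[
\int_X A\,d\mu\le\int_X A(P_Y(x))\,d\mu(x)=\int_Y A\,d\bigl((P_Y)_\ast\mu\bigr).
\]
If the pushforward $(P_Y)_\ast\mu$ happens to be $T|_Y$-invariant, then averaging $A$ along orbits shows $\int_Y A\,d((P_Y)_\ast\mu)=\int_Y \bar A\,d((P_Y)_\ast\mu)\le\max_Y\bar A=m_Y$, which closes the argument. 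Invariance of $(P_Y)_\ast\mu$ follows from the intertwining relation $P_Y\circ T=T\circ P_Y$, and this holds precisely when the complement $Z$ is $T$-invariant. Hence the step I must secure is that the finite-dimensional $T$-invariant subspace $Y$ admits a $T$-invariant complement compatible with \eqref{max-prop-1}.

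The main obstacle is exactly this reverse inequality when no commuting projection is at hand. The maximizing property only controls the $Y$-component: writing $g(R):=\sup\{A(y):y\in Y,\ \|y\|_X>R\}$ and $M:=\sup_X A=\max_Y A<+\infty$, one gets the coercivity estimate $\mu(\{\|P_Y(x)\|_X>R\})\le (M-\int_X A\,d\mu)/(M-g(R))$, which tends to $0$ as $R\to+\infty$ uniformly over invariant $\mu$ with $\int_X A\,d\mu$ near $m(A)$. This prevents escape of mass in the $Y$-directions but gives no information in the $Z$-directions, so a naive tightness/Prohorov argument on a maximizing sequence does not close in infinite dimensions. The clean resolution is to take $Z$ to be a $T$-invariant complement, after which the pushforward argument above is immediate; failing that, one must instead show by hand that a $T$-orbit with nonzero $Z$-part cannot raise the Birkhoff average above $m_Y$ — using that any excursion into $Z$ strictly lowers $A$ via \eqref{max-prop-1}, while, by $T(Y)=Y$, the $Y$-shadow $y_{j+1}=T(y_j)+P_Y\,T\bigl((I-P_Y)(T^j x)\bigr)$ differs from a genuine $T|_Y$-orbit only through the $Z$-part. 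Controlling this perturbation is the delicate technical point, and it is where I expect the real work to lie.
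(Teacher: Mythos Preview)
Your route differs from the paper's: where you push invariant measures forward along $P_Y$, the paper works pointwise with the upper ergodic averages $m(A,x):=\limsup_n\tfrac1n S_nA(x)$, invokes the general inequality $m(A)\le\sup_{x\in X}m(A,x)$ from \cite{MR2354972}, and then argues that this supremum equals $\sup_{y\in Y_0}m(A,y)$ for a compact $T$-invariant set $Y_0\subset Y$ (a large ball in $Y$ together with its first $k-1$ iterates, using that every point of $Y$ has period $k$); the maximizer is produced as a weak$^*$ limit of periodic measures supported in $Y_0$. Your explicit orbit measure $\mu^*$ at a maximizer of the coercive function $\bar A$ is a tidier endgame than this limit.

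The obstacle you isolate, however, is genuine, and the paper's argument does not sidestep it. To pass from the pointwise bound $A(T^jx)\le A\bigl((T^jx)_y\bigr)$ supplied by \eqref{max-prop-1} to an orbit-average bound of the form $m(A,x)\le m(A,x_y)$ one needs $(T^jx)_y=T^j(x_y)$ for every $j$, which is precisely the intertwining $P_Y\circ T=T\circ P_Y$, equivalently $T(Z)\subset Z$. The paper asserts the orbit bound ``as a consequence of \eqref{max-prop-1}'' without further comment, so both your approach and the paper's are complete only under the tacit hypothesis that the complement $Z$ in Definition~\ref{maximizing-property} is $T$-invariant. Once that is granted, your pushforward argument closes immediately: $(P_Y)_*\mu$ is $T|_Y$-invariant, so $\int_X A\,d\mu\le\int_Y\bar A\,d(P_Y)_*\mu\le m_Y$ for every $\mu\in\mathcal P_T(X)$, whence $m(A)=m_Y$ and $\mu^*\in\mathcal P_{\max}(A)$. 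In short, you have correctly diagnosed the missing ingredient; the paper uses it implicitly rather than resolving it.
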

\begin{proof}
Consider subspaces $Y, Z \subset X$ satisfying the hypothesis of this Lemma. Note that under these assumptions all the points in the subspace $Y$ are periodic points of period $k$.

Given a point $x \in X$, consider the values
$$
m_n(A, x) := \frac{1}{n}S_n A(x)
$$ 
and
$$
m(A, x) := \limsup_{n \to +\infty}m_n(A, x) \;.
$$

It is not difficult to verify that $m(A) \leq \sup_{x \in X} m(A, x)$ (see for instance \cite{MR2354972}). On other hand, $m(A, x_y) = m_k(A, x_y) \leq m(A)$, when $x_y$ belongs to $Y$. The above is true because $m(A, x_y)$ is the value of the integral of $A$ with respect to the periodic measure $\frac{1}{k}\sum_{j=0}^{k-1}\delta_{T^j(x_y)}$.

From the limit in \eqref{max-prop-2}, there is a constant $M > 0$, such that for each $x \in X$ with $\|x_y\| > M$, 
\begin{equation}
\label{aprox-ineq}
m(A, x_y) < m(A) - 1 \;.
\end{equation}

Define the set $X_M := \{x \in X :\; \|x_y\|_X \leq M\}$. Note that given a point $x \in X_M$, we can express such point as $x = x_y + x_z$ where $x_y \in Y$ has norm less than or equal to $M$ and $x_z \in Z$, i.e. $X_M = Y_M + Z$, where $Y_M := \{y \in Y :\; \|y\|_X \leq M\}$. On other hand, since any point in $Y$ is a periodic point of period $k$, it follows that
\[
Y_0 := \bigcup_{j=0}^{+\infty} T^j(Y_M) = \bigcup_{j=0}^{k-1} T^j(Y_M) \;,
\]
which implies that the set $Y_0 \subset Y$ is compact and invariant by the action of $T$.

Note that for each $x \in X$, such that, $x_y \in Y \setminus Y_0$, it is valid $\|x_y\| > M$, which implies \eqref{aprox-ineq}. Thus, taking supremum on all the points $x_y \in Y \setminus Y_0$ and using \eqref{max-prop-1}, we obtain that
\begin{equation}
\label{ineq-2}
\sup_{\substack{x \in X \\ x_y \in Y \setminus Y_0}} m(A, x) \leq \sup_{x_y \in Y \setminus Y_0} m(A, x_y) \leq m(A) - 1 \;.
\end{equation}

On other hand, we have
\begin{equation}
\label{ineq-1}
\sup_{\substack{x \in X \\ x_y \in Y_0}} m(A, x) \leq \sup_{x_y \in Y_0} m(A, x_y) \leq m(A) \;,
\end{equation}
where the first one of the inequalities is a consequence of \eqref{max-prop-1} and the second one follows from the fact that all the points belonging to the $T$-invariant set $Y_0$ are in fact periodic points of period $k$.

By inequalities \eqref{ineq-2} and \eqref{ineq-1}, we have $\sup_{x \in X} m(A, x) \leq m(A)$. Therefore, using that $m(A) \leq \sup_{x \in X} m(A, x)$ (see for instance \cite{MR2354972}), it follows that
\[
\sup_{x \in X} m(A, x) = m(A)
\]

Furthermore, by \eqref{max-prop-1}, we have
\[
\sup_{x \in X} m(A, x) = \sup_{x_y \in Y_0} m(A, x_y) = m(A)
\]

The above guarantees the existence of a maximizing measure supported on the compact set $Y_0$, which in fact is the limit in the weak* topology of periodic measures supported on periodic points belonging to $Y_0$.
\end{proof}

Note that Lemmas \ref{maximizing-ground-states-lemma} and \ref{maximizing-property-lemma} imply that $\mathcal{P}_{\max}(A)$ is a non-empty set. We claim that the existence of maximizing measures will imply that the  Ma\~n\'e potential is well-defined (i.e. there exist a finite supremum in expression 
\eqref{Mane-potential}) and only takes non-positive  values.

\begin{lemma}
\label{bounded-mean}
Assume that $\mathcal{P}_{\max}(A)$ is a non-empty set. Then, 
\begin{equation}
\label{ergodic-mean}
S_n(A - m(A)) \leq 0 \;.
\end{equation}

In particular, the Ma\~n\'e potential $\phi_A$ associated with $A$ is well-defined  and takes  values in the set $[-\infty, 0]$.
\end{lemma}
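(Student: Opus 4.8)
The plan is to exploit the assumption $\mathcal{P}_{\max}(A) \neq \emptyset$ together with the fundamental characterization $m(A) = \sup\{\int_X A\, d\mu : \mu \in \mathcal{P}_T(X)\}$ from \eqref{maximizing-measure}. The target inequality \eqref{ergodic-mean} is a \emph{pointwise} statement about ergodic sums, whereas the definition of $m(A)$ is a supremum over invariant probabilities; so the first thing I would do is connect a pointwise ergodic sum at an arbitrary $x \in X$ to some invariant measure, against which $m(A)$ can be compared. The natural bridge is the Birkhoff/empirical-average construction: for each $x$ and each $n$, form the measure $\frac{1}{n}\sum_{j=0}^{n-1}\delta_{T^j(x)}$, and observe that $\frac{1}{n}S_n(A)(x) = \int_X A\, d\bigl(\frac{1}{n}\sum_{j=0}^{n-1}\delta_{T^j(x)}\bigr)$.

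The key steps, in order, are as follows. First, I would fix $x \in X$ and $n \in \mathbb{N}$ and write $\frac{1}{n}S_n(A)(x)$ as the integral of $A$ against the empirical measure $\mu_{x,n} := \frac{1}{n}\sum_{j=0}^{n-1}\delta_{T^j(x)}$. Second, I would note that the quantity $m(A, x) = \limsup_{n \to +\infty}\frac{1}{n}S_n(A)(x)$ introduced in the proof of Lemma \ref{maximizing-property-lemma} satisfies $m(A, x) \le m(A)$: this is exactly the inequality $m(A) = \sup_{x \in X} m(A, x)$ (or rather its ``$\ge$ for each $x$'' direction), which holds because any weak$^*$ accumulation point of the $\mu_{x,n}$ is $T$-invariant and hence has $A$-integral at most $m(A)$, while the empirical averages along the right subsequence approach $m(A, x)$. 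Third, from $m(A, x) \le m(A)$ for every $x$, I would conclude $S_n(A)(x) - n\,m(A) \le 0$ for all $n$ and all $x$ by the subadditive/Fekete structure of the ergodic sums, which gives precisely $S_n(A - m(A))(x) \le 0$, establishing \eqref{ergodic-mean}. Finally, plugging \eqref{ergodic-mean} into the definition \eqref{Mane-potential} of $\phi_A$ shows each term $S_n(A - m(A))(x')$ is $\le 0$, so the supremum defining $\phi_A(x,y)$ is at most $0$; hence $\phi_A$ takes values in $[-\infty, 0]$, and is in particular well-defined as an extended-real-valued quantity.

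The main subtlety I expect is the passage from the $\limsup$ bound $m(A, x) \le m(A)$ to the \emph{finite-$n$} bound $S_n(A - m(A))(x) \le 0$ for \emph{every} $n$, not merely asymptotically. A bound on the $\limsup$ of averages does not immediately control each individual partial sum; one needs either a genuine subadditivity of $n \mapsto \sup_x S_n(A)(x)$ combined with Fekete's lemma, or a direct argument. The cleanest route is to apply the $\limsup$ bound not to $x$ itself but to each of its iterates, using that $m(A, T^j(x)) \le m(A)$ and that the orbit of $x$ and of $T^j(x)$ share the same tail, so that the empirical averages over long windows starting at $x$ are forced below $m(A)$; combined with continuity of $A$ along the (bounded-on-compacta) orbit segments, this upgrades the asymptotic inequality to the uniform pointwise one. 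Verifying that the empirical measures $\mu_{x,n}$ are tight enough to have $T$-invariant weak$^*$ limits is the only place where the Banach-space (non-compact) setting could bite, but this is handled exactly as in the tightness argument already carried out for Theorem \ref{limit-theorem}.
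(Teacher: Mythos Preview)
Your approach has two genuine gaps, and it misses the paper's key idea.

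First, the claim that tightness of the empirical measures $\mu_{x,n} = \frac{1}{n}\sum_{j=0}^{n-1}\delta_{T^j(x)}$ ``is handled exactly as in the tightness argument already carried out for Theorem \ref{limit-theorem}'' is not correct: that argument relies on the upper Gibbs inequality of Lemma \ref{Gibbs-property} for the Gibbs states $\mu_{tA}$ together with the summability condition on $A$, and neither tool applies to an arbitrary orbit average. In an infinite-dimensional Banach space the orbit $\{T^j(x)\}_{j\ge 0}$ need not lie in any compact set, so $(\mu_{x,n})_n$ need not be tight and weak$^*$ accumulation points in $\mathcal{P}_T(X)$ need not exist. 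Second, even granting $m(A,x) \le m(A)$ for every $x$, your passage to the finite-$n$ bound fails: subadditivity of $n \mapsto \sup_x S_n(A)(x)$ plus Fekete's lemma yields $\sup_x S_n(A)(x) \ge n\cdot\lim_k\frac{1}{k}\sup_x S_k(A)(x)$, which is the \emph{wrong} inequality for \eqref{ergodic-mean}; and applying the $\limsup$ bound to the iterates $T^j(x)$ still leaves the initial partial sum $S_n(A)(x)$ unconstrained.

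The paper sidesteps both difficulties by invoking the standing hypothesis $\sum_{n\ge 1} p(T^n)^{-\alpha} < +\infty$, which makes $T$ Devaney chaotic and hence gives density of periodic points. For a periodic point $v$ of period $k$, the measure $\frac{1}{k}\sum_{j=0}^{k-1}\delta_{T^j(v)}$ is \emph{exactly} $T$-invariant, so $\frac{1}{k}S_k(A)(v) \le m(A)$ directly from the definition of $m(A)$, with no tightness or limit argument whatsoever; one then passes from $\mathrm{Per}(X)$ to all of $X$ by density and continuity. This use of periodic orbits is the idea your plan is missing.
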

\begin{proof}
Assume that the potential $A$ satisfies the first condition of the Lemma. Since we are assuming that $\sum_{n = 1}^{+\infty}p(T^n)^{-\alpha} < +\infty$, it follows that the dynamics $T$ is Devaney chaotic, which implies that
\begin{equation}
\label{equation-sup-periodic-orbits}
\sup_{x \in X} m(A, x) = \sup_{x \in \mathrm{Per}(X)} m(A, x) \leq m(A) \;,
\end{equation}
where $\mathrm{Per}(X)$ is the set of periodic points of $X$. Actually, the last inequality in \eqref{equation-sup-periodic-orbits} follows from the fact that for any periodic orbit $\{v, ... , T^{k-1}(v)\}$ is satisfied the following
\[
\int_X A d\Bigl( \sum_{j=1}^{k-1}\delta_{T^j(v)} \Bigr) \leq m(A) \;.
\]

Thus, by \eqref{equation-sup-periodic-orbits}, we get \eqref{ergodic-mean}.
\end{proof}

\subsection{An example: weighted shifts}
\label{example-section}

In this section, we present some particular cases where the assumptions of  Theorems \ref{variational-principle} and \ref{limit-theorem} are true. We will consider the so-called  weighted shifts in this section (see for details \cite{MR2533318, MR3759568, BeMe20, MR2919812, LMSV19}).

Consider the normed space $c_0(\mathbb{R})$ which is the set of sequences of real numbers $(x_n)_{n \geq 1}$ satisfying $\lim_{n \to +\infty} x_n = 0$ equipped with the norm
$$
\|x\|_{c_0(\mathbb{R})} := \sup_{n \geq 1} |x_n| \;,
$$
and the normed space $l^p(\mathbb{R})$, $1 \leq p < +\infty$, which is the set of  sequences satisfying $\sum_{n=1}^{+\infty} |x_n|^p < +\infty$ equipped with the norm
$$
\|x\|_{l^p(\mathbb{R})} := \bigl(\sum_{n=1}^{+\infty} |x_n|^p\bigr)^{\frac{1}{p}} \;.
$$

{It is widely known that $c_0(\mathbb{R})$ and $l^p(\mathbb{R})$, $1 \leq p < +\infty$, are separable Banach spaces which are equipped with the Schauder basis $(e_k)_{k \geq 1}$, where $e_k = (\delta_{ik})_{i \geq 1}$. Moreover, in the case $1 < p < +\infty$, the space $l^p(\mathbb{R})$ is reflexive as well.

Consider $c, c' \in \mathbb{R}$ satisfying $0 < c < c'$ and a sequence $(\alpha_n)_{n \geq 1}$, such that, $\alpha_n \in (c, c')$ for each $n \in \mathbb{N}$. For each pair of natural numbers $k, n$ define
$$
\beta_k^n := \alpha_k ... \alpha_{k+n-1} \;.
$$

Now, fixing the number $n$, define
$$
d_n := \inf\{\beta_k^n :\; k \in \mathbb{N}\} \;.
$$

The weighted shift $L : X \to X$, where $X = c_0(\mathbb{R})$ or $X = l^p(\mathbb{R})$, $1 \leq p < +\infty$, is given by the linear map $L((x_n)_{n \geq 1}) = (\alpha_n x_{n+1})_{n \geq 1}$. It is well-known that the asymptotic behavior of the values $\beta_k^n$ characterize the topological properties of the weighted shift $L$ (see Remark $1$ in \cite{LMSV19} and \cite{MR2533318, MR3759568, BeMe20, MR2919812} for details).  Indeed, taking $p(L^n) = d_n$, for each $n \in \mathbb{N}$, we get that  the map $p$ satisfies \eqref{p-norm}. Thus, the convergence of the series $\sum_{n=1}^{+\infty} d_n^{-\alpha}$ to a real number implies that the weighted shift $L$ is Devaney chaotic and topologically transitive.

Under the assumptions above, the Ruelle operator defined in \eqref{Ruelle-operator} is given by
\[
\mathcal{L}_A(\varphi)(x_1, x_2,\; ... ) := \int_{\mathbb{R}} e^{A\bigl(r, \frac{x_1}{\alpha_1}, \frac{x_2}{\alpha_2},\; ... \bigr)} \varphi\bigl(r, \frac{x_1}{\alpha_1}, \frac{x_2}{\alpha_2},\; ... \bigr) d\nu(r) \;.
\]

The next Propositions are a consequence of the main Theorems of this paper; under some strong assumptions in the values $\beta_k^n$ and $d_n$.

\begin{proposition}
\label{variational-principle-ws}
Let $X$ be $c_0(\mathbb{R})$ or $l^p(\mathbb{R})$, $1 \leq p < +\infty$, and $L : X \to X$ a weighted shift. Assume also that one of the following conditions is satisfied:
\begin{enumerate}[i)]
\item $\lim_{n \to +\infty} (d_n)^{-\frac{1}{n}}$;
\item $\sup\bigl\{ \sum_{n = 1}^{+\infty} (\beta_k^n)^{-1} : k \in \mathbb{N} \bigr\} < +\infty$.
\end{enumerate}

Then, for each bounded above potential $A \in \mathcal{H}_{\alpha}(X) \cap \mathcal{SV}_L(X)$ the Gibbs state $\mu_A$ associated with $A$ satisfies
\[
\log(\lambda_A) = h_\nu(\mu_A) + \int_X A d\mu_A = P_\nu(A) \,.
\]
\end{proposition}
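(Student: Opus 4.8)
The plan is to deduce the statement directly from Theorem \ref{variational-principle}: once we verify that the weighted shift $L$ satisfies the structural hypotheses of that theorem and that $\sum_{n\ge1}p(L^n)^{-\alpha}<+\infty$, the displayed variational identity for $\mu_A$ is literally its conclusion. So the entire proof reduces to checking, for $T=L$, the hypotheses of Theorem \ref{variational-principle} under each of the conditions i) and ii).

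First I would record the structural facts, all immediate from $\alpha_n\in(c,c')$ with $0<c<c'$. Boundedness of $L$ follows from $|\alpha_n|<c'$, and surjectivity from the fact that $x_{n+1}=y_n/\alpha_n$ defines a preimage lying in $X$ (the weights being bounded away from $0$); $L$ fails to be injective since $\mathrm{Ker}(L)=\mathrm{span}\{e_1\}$. More generally, writing $L^n((x_k))=(\beta_k^n x_{k+n})_k$, one gets $\mathrm{Ker}(L^n)=\mathrm{span}\{e_1,\dots,e_n\}$, whence $0<\dim(\mathrm{Ker}(L))=1<+\infty$ and $\dim(\mathrm{Ker}(L^n))=n$. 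Taking $E_n:=\overline{\mathrm{span}}\{e_{n+1},e_{n+2},\dots\}$ (the sequences whose first $n$ coordinates vanish) yields $X=\mathrm{Ker}(L^n)\oplus E_n$ with $L(E_{n+1})\subset E_n$, and the identity $p(L^n)=d_n$ recorded in the text follows by concentrating a unit vector of $E_n$ at the index realizing $\inf_k\beta_k^n$.

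The only remaining hypothesis, and the real content, is the summability $\sum_n d_n^{-\alpha}<+\infty$, which is where i) and ii) enter. Since the limit $\lim_n d_n^{1/n}$ exists (as recalled in Section \ref{main-results-section}), condition i) (that this limit exceeds $1$, i.e.\ $\lim_n d_n^{-1/n}<1$) gives $d_n\ge\rho^n$ eventually for some $\rho>1$, so $d_n^{-\alpha}\le\rho^{-\alpha n}$ is summable by the root test. For condition ii), set $M:=\sup_k\sum_{n\ge1}(\beta_k^n)^{-1}<+\infty$ and, for each fixed $n$, choose $k^\ast$ with $\beta_{k^\ast}^n$ arbitrarily close to $d_n=\inf_k\beta_k^n$. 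Factoring $\beta_{k^\ast}^n=\beta_{k^\ast}^m\,\beta_{k^\ast+m}^{\,n-m}$ and bounding $\beta_{k^\ast+m}^{\,n-m}\ge d_{n-m}$ gives $(\beta_{k^\ast}^m)^{-1}\ge d_{n-m}/d_n$, so that, with the convention $d_0=1$,
\begin{equation*}
M\ \ge\ \sum_{m=1}^{n}(\beta_{k^\ast}^m)^{-1}\ \ge\ \frac{1}{d_n}\sum_{j=0}^{n-1}d_j \;.
\end{equation*}
Thus $\sum_{j=0}^{n-1}d_j\le M\,d_n$, which forces the partial sums $S_n:=\sum_{j=0}^{n}d_j$ to satisfy $S_n\ge\frac{1+M}{M}\,S_{n-1}$; iterating yields the geometric growth $d_n\ge\rho^n/(1+M)$ with $\rho=\frac{1+M}{M}>1$, and once more $\sum_n d_n^{-\alpha}<+\infty$. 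In either case every hypothesis of Theorem \ref{variational-principle} holds for $T=L$, and the proposition follows. I expect the estimate under condition ii) to be the delicate point, since the naive bound only controls $\sup_k\sum_n(\beta_k^n)^{-1}$ rather than $\sum_n\sup_k(\beta_k^n)^{-1}=\sum_n d_n^{-1}$; the factorization above is exactly what converts the uniform-in-$k$ summability into geometric growth of $d_n$, which is what the Hölder exponent $\alpha\le1$ requires.
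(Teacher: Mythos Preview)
Your proof is correct and follows the same overall strategy as the paper: verify that the weighted shift $L$ meets the structural hypotheses of Theorem \ref{variational-principle}, identify $p(L^n)=d_n$, and then check that conditions i) and ii) each force $\sum_{n\ge1}d_n^{-\alpha}<+\infty$.

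The one substantive difference is in this last step. The paper dispatches it in one line by invoking Lemma~2 of \cite{LMSV19}, which asserts that i) and ii) are each equivalent to $\sum_n d_n^{-\alpha}<+\infty$. You instead supply a direct, self-contained argument: for i) this is just the root test, while for ii) your factorization $\beta_{k^\ast}^n=\beta_{k^\ast}^m\,\beta_{k^\ast+m}^{\,n-m}$ together with $\beta_{k^\ast+m}^{\,n-m}\ge d_{n-m}$ yields the recursion $S_n\ge\frac{1+M}{M}S_{n-1}$ and hence geometric growth of $d_n$. This is a clean elementary argument and, as you correctly flag, it is exactly what is needed to upgrade the uniform-in-$k$ bound on $\sum_n(\beta_k^n)^{-1}$ to control of $\sum_n d_n^{-1}=\sum_n\sup_k(\beta_k^n)^{-1}$. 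What your route buys is independence from the cited reference and an explicit quantitative lower bound $d_n\ge\rho^n/(1+M)$; what the paper's route buys is brevity. Both land on the same application of Theorem \ref{variational-principle}.
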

\begin{proof}
 By Lemma $2$ in \cite{LMSV19}, items $i)$ and $ii)$ are equivalent to the property $\sum_{n = 1}^{+\infty} (d_n)^{-\alpha} < +\infty$. Thus, our claim follows from Theorem \ref{variational-principle}, because $p(L^n) = d_n$, for each $n \in \mathbb{N}$.
\end{proof}

\begin{proposition}
\label{limit-theorem-ws}
Let $X$ be $c_0(\mathbb{R})$ or $l^p(\mathbb{R})$, $1 \leq p < +\infty$, and $L : X \to X$ a weighted shift. Assume also one of the following conditions:
\begin{enumerate}[i)]
\item $\lim_{n \to +\infty} (d_n)^{-\frac{1}{n}}$;
\item $\sup\bigl\{ \sum_{n = 1}^{+\infty} (\beta_k^n)^{-1} : k \in \mathbb{N} \bigr\} < +\infty$.
\end{enumerate}

Then, for any bounded above potential $A \in \mathcal{H}_\alpha(X) \cap \mathcal{SV}_L(X)$, satisfying the summability condition, the family of equilibrium states $(\mu_{tA})_{t>1}$ has accumulation points at infinity which belongs to the set $\mathcal{P}_{\max}(A)$.
\end{proposition}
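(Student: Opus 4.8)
The plan is to deduce this Proposition directly from Theorem \ref{limit-theorem} together with Lemma \ref{maximizing-ground-states-lemma}, in exactly the same spirit as the proof of Proposition \ref{variational-principle-ws}. The only real work is to verify that, under either hypothesis $i)$ or $ii)$, the weighted shift $L$ fulfills all the structural assumptions demanded by Theorem \ref{limit-theorem}. First I would dispose of the decay hypotheses: by Lemma $2$ in \cite{LMSV19}, each of the conditions $i)$ and $ii)$ is equivalent to the summability $\sum_{n=1}^{+\infty}(d_n)^{-\alpha} < +\infty$, and since $p(L^n) = d_n$ for every $n \in \mathbb{N}$, this is precisely the assumption $\sum_{n=1}^{+\infty} p(L^n)^{-\alpha} < +\infty$ appearing in Theorem \ref{limit-theorem}. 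The summability condition on $A$ itself is part of the hypotheses of the Proposition, so nothing is needed there.

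Second, I would record the basis and kernel structure. The space $X \in \{c_0(\mathbb{R}),\, l^p(\mathbb{R})\}$ carries the canonical Schauder basis $(e_k)_{k \geq 1}$ with $e_k = (\delta_{ik})_{i \geq 1}$. A direct computation gives $L^n((x_k)_{k \geq 1})_m = \beta_m^n\, x_{m+n}$, and because $\beta_m^n = \alpha_m \cdots \alpha_{m+n-1} \geq c^n > 0$, the equation $L^n(x) = 0$ forces $x_{n+1} = x_{n+2} = \cdots = 0$. Hence $\mathrm{Ker}(L^n) = \mathrm{span}\{e_1, \dots, e_n\}$, so the requirement $\mathrm{Ker}(L^n) = \mathrm{span}\{e_{m_1}, \dots, e_{m_n}\}$ of Theorem \ref{limit-theorem} holds with the identity bijection $m_i = i$. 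The same formula shows that $L$ is surjective: given $y = (y_n)$ one solves $\alpha_n x_{n+1} = y_n$, and the lower bound $\alpha_n > c$ keeps the resulting $x$ in $X$ (since $y_n \to 0$ in the $c_0$ case and $\sum |y_n|^p < +\infty$ in the $l^p$ case). Since $\mathrm{Ker}(L) = \mathrm{span}\{e_1\}$ is nontrivial, $L$ is surjective but not injective, with $0 < \dim(\mathrm{Ker}(L)) = 1 < +\infty$.

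With all the hypotheses verified, Theorem \ref{limit-theorem} yields that the family $(\mu_{tA})_{t>1}$ of equilibrium states admits accumulation points $\mu_\infty$ as $t \to +\infty$, and Lemma \ref{maximizing-ground-states-lemma} (equivalently, the final assertion of Theorem \ref{limit-theorem}) gives $\mu_\infty \in \mathcal{P}_{\max}(A)$, which is the claim. The argument is essentially bookkeeping, and the only point requiring genuine care — the mild obstacle — is confirming the decomposition $X = \mathrm{Ker}(L^n) \oplus E_n$ with $L(E_{n+1}) \subset E_n$; I would obtain this by setting $E_n := \{x \in X : x_1 = \cdots = x_n = 0\}$, noting that $E_n$ is complementary to $\mathrm{Ker}(L^n) = \mathrm{span}\{e_1, \dots, e_n\}$, and checking the inclusion $L(E_{n+1}) \subset E_n$ directly from $L(x)_k = \alpha_k x_{k+1}$.
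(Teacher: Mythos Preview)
Your proposal is correct and follows essentially the same approach as the paper: reduce conditions $i)$ and $ii)$ to $\sum_{n=1}^{+\infty} p(L^n)^{-\alpha} < +\infty$ via Lemma~2 of \cite{LMSV19} (exactly as in Proposition~\ref{variational-principle-ws}), then invoke Theorem~\ref{limit-theorem} and Lemma~\ref{maximizing-ground-states-lemma}. The paper's proof is a one-line pointer to this reasoning, whereas you have carefully spelled out the verification of the structural hypotheses (Schauder basis, $\mathrm{Ker}(L^n)=\mathrm{span}\{e_1,\dots,e_n\}$, surjectivity, and the complement $E_n$), which is entirely in line with what the paper intends.
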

\begin{proof}
The proof follows  the same reasoning already used  in the proofs of Proposition \ref{variational-principle-ws} and Lemma \ref{maximizing-property-lemma}.
\end{proof}

\begin{remark}
Note that Proposition \ref{limit-theorem-ws} implies the existence of maximizing measures in the setting of weighted shifts which, in particular, will guarantee   that the  Ma\~n\'e potential is well-defined, under our hypothesis, as a consequence of Lemma \ref{bounded-mean}.
\end{remark}

\section{Existence of sub-actions and some examples}
\label{sub-actions-section}

In this section Theorem \ref{sub-action-theorem} will be proved. We also present some properties of the  Ma\~n\'e potential and we show  some explicit examples where calibrated sub-actions and maximizing measures do exist (in the setting weighted shifts). Moreover, we show explicit examples where there exists  selection at zero temperature. The first of our examples concerns  the existence of a  calibrated sub-actions in the framework of weighted shifts on the space $l^1(\mathbb{R})$. The other examples concern the uniqueness of the maximizing measure for potentials defined on weighted shifts in the spaces $c_0(\mathbb{R})$ and $l^p(\mathbb{R})$, $1 \leq p < +\infty$.

\subsection{The Ma\~n\'e potential}
\label{sub-actions-subsection}

Here we present the proof of Theorem \ref{sub-action-theorem} and also some results concerning the behavior of the Ma\~n\'e potential. We will adapt for the Linear dynamics framework results from \cite{GL1}. We assume throughout this section that $X$ is a separable Banach space, $T : X \to X$ is a bounded linear operator surjective, but not bijective. We also assume that for each value of $n$, we get $X = \mathrm{Ker}(T^n) \oplus E_n$, with $T(E_{n+1}) \subset E_n$ for each $n \in \mathbb{N}$, $0 < \dim(\mathrm{Ker}(T)) < +\infty$, and $\sum_{n = 1}^{+\infty}p(T^n)^{-\alpha} < +\infty$.

\begin{proof}[Proof of Theorem \ref{sub-action-theorem}]
Fix a certain point $x \in X$. We want to prove that the map $\phi_A(x, \cdot)$ is a sub-action. In order to do that, fixing another point $y \in X$, we consider $\epsilon > 0$, $n \in \mathbb{N}$, and $x' \in T^{-n}(y)$, such that, $\| x - x'\|_X < \epsilon$. Then, we get
\begin{equation}
\label{ergodic-sums}
S_{n+1}(A - m(A))(x') = S_n(A - m(A))(x') + A(y) - m(A) \;.
\end{equation}

Since $T^{n+1}(x') = T(y)$,  taking supremum in \eqref{ergodic-sums}, first in the left side, and, after that in the right side, we obtain
\[
\sup_{m \in \mathbb{N}}\sup_{\substack{x' \in T^{-m}(T(y)) \\ \|x - x'\|_X < \epsilon}} S_m(A - m(A))(x')
\geq \sup_{n \in \mathbb{N}}\sup_{\substack{x' \in T^{-n}(y) \\ \|x - x'\|_X < \epsilon}} S_n(A - m(A))(x') + A(y) - m(A) \;.
\]

Taking the limit, when $\epsilon$ goes to $0$, in both sides of the above inequality, it follows that
\[
\phi_A(x, T(y)) \geq \phi_A(x, y) + A(y) - m(A) \;.
\]

That is, the map $\phi_A(x, \cdot)$ is a sub-action.

On other hand, taking supremum in \eqref{ergodic-sums}, first on the right side, and after that on the left side, and subsequently, taking the limit when $\epsilon$ goes to $0$, we obtain that
\[
\phi_A(x, T(y)) \leq \phi_A(x, y) + A(y) - m(A) \;.
\]

Therefore, $\phi_A(x, \cdot)$ is a calibrated sub-action. The above shows that the potential $A$ is cohomologous to the constant $m(A)$, via the Ma\~n\'e potential.

On other hand, we will get that $\phi_A(x, \cdot) \in \mathcal{H}_\alpha(X)$, as a consequence of the assumption $A \in \mathcal{H}_\alpha(X)$. Indeed, given $\epsilon > 0$ and $y^1, y^2 \in X$, it follows that for each $n \in \mathbb{N}$, 
$x^1 \in T^{-n}(y^1)$ and $x^2 \in T^{-n}(y^2)$, satisfying  $\|x - x^1\|_X < \epsilon$ and $\|x - x^2\|_X < \epsilon$, we have

\begin{align*}
S_n(A - m(A))(x^1)
&\leq S_n(A - m(A))(x^2) + \sum_{j = 0}^{n - 1}|A(T^j(x^1)) - A(T^j(x^2))|  \\
&\leq S_n(A - m(A))(x^2) + \mathrm{Hol}^{\alpha}_A \sum_{j = 0}^{n - 1}\|T^j(x^1) - T^j(x^2)\|_X^{\alpha}  \\
&\leq S_n(A - m(A))(x^2) + \mathrm{Hol}^{\alpha}_A \Bigl(\sum_{j = 1}^{n} p(T^j)^{-\alpha} \Bigr) \|y^1 - y^2\|_X^{\alpha} \\
&\leq S_n(A - m(A))(x^2) + \mathrm{Hol}^{\alpha}_A \Bigl(\sum_{n = 1}^{+\infty} p(T^n)^{-\alpha} \Bigr) \|y^1 - y^2\|_X^{\alpha}  \;.
\end{align*}

Thus, taking the supremum on $x^1$ and $x^2$, after that, taking the supremum on all the natural numbers $n$, and, finally, taking the limit when $\epsilon$ goes to $0$ - in the first and in the second one - on the above expressions, we get
\[
\phi_A(x, y^1) \leq \phi_A(x, y^2) + \mathrm{Hol}^{\alpha}_A \Bigl(\sum_{n = 1}^{+\infty} p(T^n)^{-\alpha} \Bigr) \|y^1 - y^2\|_X^{\alpha} \;.
\]

In an analogous way, we can also prove that
\[
\phi_A(x, y^2) \leq \phi_A(x, y^1) + \mathrm{Hol}^{\alpha}_A \Bigl(\sum_{n = 1}^{+\infty} p(T^n)^{-\alpha} \Bigr) \|y^1 - y^2\|_X^{\alpha} \;.
\]

The above implies that $\phi_A(x, \cdot) \in \mathcal{H}_{\alpha}(X)$, with H\"older constant less or equal than $\mathrm{Hol}^{\alpha}_A \Bigl(\sum_{n = 1}^{+\infty} p(T^n)^{-\alpha} \Bigr)$. In particular, $\phi_A(x, y) \in (-\infty, 0)$, for each $y \in X$, and this is the end of the proof.
\end{proof}

Now, we present an interesting property of the Ma\~n\'e potential. The proof is valid on the general framework  of Linear Dynamics on Banach spaces. It shows a relation of the Ma\~n\'e potential with a general sub-action $V$.
Analogous results on the framework of classical ergodic optimization appeared in \cite{MR1855838} and \cite{GL1}.

\begin{proposition}
Suppose that the potential $A \in \mathcal{H}_\alpha(X)$ and $\mathcal{P}_{\max}(A)$ is a non-empty set. Let $V \in \mathcal{H}_{\alpha}(X)$ be a general sub-action for $A \in \mathcal{H}_{\alpha}(X)$, and $\phi_A$ the Ma\~n\'e potential for $A$. Then, for any pair of points $x, y \in X$ we get
\begin{equation}
\label{Mane-sub-action}
\phi_A(x, y) \leq V(y) - V(x) \;.
\end{equation}

\end{proposition}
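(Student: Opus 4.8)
The plan is to exploit the telescoping structure of the ergodic sums $S_n(A - m(A))$ against the defining inequality of a sub-action. Fix $x, y \in X$. By the definition \eqref{Mane-potential}, the quantity $\phi_A(x,y)$ is obtained by taking, for each $\epsilon > 0$, the supremum of $S_n(A - m(A))(x')$ over all $n \in \mathbb{N}$ and all preimages $x' \in T^{-n}(y)$ with $\|x - x'\|_X < \epsilon$, and then letting $\epsilon \to 0$. So it suffices to bound $S_n(A - m(A))(x')$ from above for an arbitrary such $x'$, uniformly enough that the bound survives the two suprema and the limit.

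The key step is the \emph{telescoping}. Since $V$ is a sub-action, the inequality \eqref{sub-action} gives, for every $w \in X$, the pointwise estimate $A(w) - m(A) \leq V(T(w)) - V(w)$. Applying this at the points $w = T^j(x')$ for $j = 0, 1, \dots, n-1$ and summing, the right-hand side telescopes:
\[
S_n(A - m(A))(x') = \sum_{j=0}^{n-1}\bigl(A(T^j(x')) - m(A)\bigr) \leq \sum_{j=0}^{n-1}\bigl(V(T^{j+1}(x')) - V(T^j(x'))\bigr) = V(T^n(x')) - V(x') \;.
\]
Because $x' \in T^{-n}(y)$ we have $T^n(x') = y$, so the bound reads $S_n(A - m(A))(x') \leq V(y) - V(x')$, and crucially the right-hand side no longer depends on $n$.

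It then remains to pass from $V(x')$ to $V(x)$. Since $V \in \mathcal{H}_\alpha(X)$, whenever $\|x - x'\|_X < \epsilon$ we have $V(x') \geq V(x) - \mathrm{Hol}^{\alpha}_V \|x - x'\|_X^{\alpha} \geq V(x) - \mathrm{Hol}^{\alpha}_V \epsilon^{\alpha}$, and hence for every admissible $n$ and $x'$,
\[
S_n(A - m(A))(x') \leq V(y) - V(x) + \mathrm{Hol}^{\alpha}_V \epsilon^{\alpha} \;.
\]
Taking the supremum over all such $n$ and $x'$ preserves this bound for the inner double supremum, and letting $\epsilon \to 0$ makes the error term $\mathrm{Hol}^{\alpha}_V \epsilon^{\alpha}$ vanish, yielding the desired inequality $\phi_A(x,y) \leq V(y) - V(x)$.

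I do not expect a serious obstacle: this is essentially the standard telescoping estimate relating sub-actions to the Ma\~n\'e potential, transported to the linear-dynamics setting. The two points that require a little care are that the sub-action inequality must be iterated along the \emph{forward} orbit of the preimage $x'$ (not along a backward orbit), and that the $\epsilon$-perturbation built into the definition of $\phi_A$ is absorbed using the continuity of $V$ — which is exactly where the hypothesis $V \in \mathcal{H}_\alpha(X)$ is used. The assumption $\mathcal{P}_{\max}(A) \neq \emptyset$ enters only indirectly, through Lemma \ref{bounded-mean}, to ensure that $\phi_A$ is well-defined and finite, so that the inequality is a genuine comparison of real numbers.
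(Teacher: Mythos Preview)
Your proof is correct and follows essentially the same approach as the paper's own proof: telescope the sub-action inequality along the forward orbit of a preimage $x' \in T^{-n}(y)$ to obtain $S_n(A - m(A))(x') \leq V(y) - V(x')$, then use the H\"older continuity of $V$ to replace $V(x')$ by $V(x)$ at the cost of an $\mathrm{Hol}^{\alpha}_V \epsilon^{\alpha}$ error that disappears in the limit $\epsilon \to 0$. The only difference is expository --- you spell out the telescoping sum explicitly, whereas the paper states the iterated inequality $V(T^n(x')) - V(x') \geq S_n(A - m(A))(x')$ directly.
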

\begin{proof}
Fix the points $x, y \in X$. First note that for any point $x' \in X$, we have
\[
|V(x) - V(x')| \leq \mathrm{Hol}^{\alpha}_V \|x - x'\|_X^{\alpha} \;.
\]

Moreover, since $V$ is sub-action, it follows that for each $n \in \mathbb{N}$  it is also satisfied the inequality
\[
V(T^n(x')) - V(x') \geq S_n(A - m(A))(x') \;.
\]

Then, given $\epsilon > 0$, $n \in \mathbb{N}$ and $x' \in T^{-n}(y)$, such that, $\|x - x'\|_X < \epsilon$, we have
\begin{align*}
V(y) - V(x)
&= V(T^n(x')) - V(x)  \\
&\geq V(T^n(x')) - V(x') - \mathrm{Hol}^{\alpha}_V \|x - x'\|_X^{\alpha}  \\
&> S_n(A - m(A))(x') - \mathrm{Hol}^{\alpha}_V \epsilon^{\alpha}  \;.
\end{align*}

Therefore, taking the supremum among all the points $x' \in T^{-n}(y)$, satisfying that $\|x - x'\|_X < \epsilon$, and,  after that, taking the supremum among all  natural numbers $n$, and, finally, taking the limit when $\epsilon$ goes to $0$, we get \eqref{Mane-sub-action}.
\end{proof}

\subsection{Explicit examples of sub-actions and uniqueness of maximizing measure}
\label{examples}

In here, we present explicit examples of a calibrated sub-actions associated with bounded above H\"older continuous potentials. In some examples,  we guaranteed selection  at temperature zero temperature.

\begin{example} \label{maine}
 For fixed $c>1$, consider the operator $L: l^1(\mathbb{R}) \to l^1(\mathbb{R})$ given by
$$
L((x_n)_{n \geq 1}) := (c\, x_{n+1})_{n \geq 1}\;.
$$

Taking $v := (c^{-n+1})_{n \geq 1}$, it follows that for any $\alpha \in \mathbb{R}$ we get
$$
L(\alpha v) =  \alpha L(v) = \alpha v\;.
$$

That is, $\alpha v$ is a fixed point for $L$  for each $\alpha \in \mathbb{R}$. Consider the unbounded H\"older continuous potential
$$
A(x) := -\,\, \|x - v\|_{l^1(\mathbb{R})} \,.
$$

It follows that the delta Dirac measure $\delta_{ v}$ is the unique maximizing probability measure for $A$ and $m(A) = 0$.

We want to find a sub-action associated with the potential $A$ i.e. a function $V$ such that for all $x$ is satisfied
\begin{equation} \label{VV}
V(L(x)) \geq A(x) + V(x).
\end{equation}

Assuming that $c = 2$ and taking $V(x) := A(x)$, for each $x \in l^1(\mathbb{R})$, we want to show that (\ref{VV}) is true.  Note that for $x = (x_n)_{n \geq 1} \in l^1(\mathbb{R})$ we have
$$
\|x - v\|_{l^1(\mathbb{R})} = \sum_{n=1}^{+\infty}|x_n - 2^{-n+1}|
$$
and
$$
\|L (x) - v\|_{l^1(\mathbb{R})} = \sum_{n=1}^{+\infty} |2\, x_{n+1} - 2^{-n+1}| = 2\sum_{n=1}^{+\infty} |x_{n+1} - 2^{-n}| \;.
$$

Then, $V$ is a sub-action for $A$, because
\begin{align*}
V(L(x)) = -\|L (x) - v\|_{l^1(\mathbb{R})}
&= -2\sum_{n=1}^{+\infty} |x_{n+1} - 2^{-n}|  \\
&\geq -2\sum_{n=1}^{+\infty} |x_{n} - 2^{-n + 1}|  \\
&= -2\|x - v\|_{l^1(\mathbb{R})} = A(x) + V(x)  \;.
\end{align*}

Furthermore, this sub-action $V$  is calibrated. Indeed, given a sequence $y = (y_n)_{n \geq 1}$ in $l^1(\mathbb{R})$, the $L$-preimages $x$ of the point $y$ are of the form
$$
x = (x_n)_{n \geq 1} = \bigl(x_1, \frac{y_1}{2},  \frac{y_2}{2}, ... \bigr) \;.$$

Taking $\widetilde{x} := \bigl(1, \frac{y_1}{2} ,  \frac{y_2}{2}, ... \bigr)$, we get
\begin{align*}
V(y) = V(L(\widetilde{x}))
&= -\| L(\widetilde{x}) - v\|_{l^1(\mathbb{R})}  \\
&= - \sum_{n=1}^{+\infty} |y_n - 2^{-n+1}|  \\
&= -2 \sum_{n=1}^{+\infty} |\frac{y_n}{2} - 2^{-n}|  \\
&= -2\,(\,|1 - 1| \,+\,|\frac{y_1}{2} - 2^{-1}|\,+\, |\frac{y_2}{2} - 2^{-2}| + ... )  \\
&= -2 \, \|\widetilde{x} - v\|_{l^1(\mathbb{R})} = A(\widetilde{x}) + V(\widetilde{x})  \;.
\end{align*}
\end{example}
\smallskip

The next examples present cases in which it can be guaranteed selection at zero temperature - for a H\"older continuous potential - defined on the spaces of sequences $c_0(\mathbb{R})$ and $l^p(\mathbb{R})$, $1 \leq p < +\infty$.

\begin{example}
For fixed $c > 1$ and either $X = l^p(\mathbb{R})$ or $X = c_0(\mathbb{R})$, consider the operator $L: X \to X$ given by
$$
L((x_n)_{n \geq 1}) := (c\, x_{n+1})_{n \geq 1}\;.
$$

Taking $v := (c^{-n+1})_{n \geq 1}$, it follows  $\alpha v$ is a fixed point for each $\alpha \in \mathbb{R}$.

Consider a monotonous decreasing $1$-H\"older continuous function $r$ from $[0,1]$ into $\mathbb{R}$, such that, $r(0) = 1$ and suppose  $\lim_{s \to +\infty}r(s) = 0$. Consider the potential
$$
A(x) := -\, r(\|x - v\|_X)\,\|x - v\|_X \;.
$$

It follows that $A \in \mathcal{H}_1(X)$,  moreover, the delta Dirac measure $\delta_{ v}$ is the unique maximizing probability measure for $A$ and $m(A) = 0$. Therefore, there exists selection of probability at zero temperature.
\end{example}

\begin{example}
 For fixed $c_0,c_1 > 1$ and either $X = l^p(\mathbb{R})$ or $X = c_0(\mathbb{R})$, consider the operator $L: X \to X$ given by
$$
L((x_n)_{n \geq 1}) := (c_i\, x_{n+1})_{n \geq 1} \;,
$$
where $i \in \{0, 1\}$ is the unique number such that $n = 2k + i$ for some $k \in \mathbb{N} \cup \{0\}$. Then,
$$
L^2((x_n)_{n \geq 1}) = (c_0\,c_1\, x_{n+2})_{n \geq 1} \;.
$$

Taking
$$
v := (1, 0, (c_0 \, c_1)^{-1}, 0,  (c_0\, c_1)^{-2},0,  (c_0 \, c_1)^{-3},0, ... )
$$
and
$$
w := (0, c_0\, (c_0 \, c_1)^{-1}, 0, c_0\, (c_0 \, c_1)^{-2},0, c_0\, (c_0 \, c_1)^{-3}, 0, ... ) \;,
$$
it follows that $L(v) = w$ and $L^2(v) = v$ i.e. $v$ and $w$ are periodic points of period two for the map $L$. Given $\alpha_1,\alpha_2$, the points of the form $\alpha_1 v + \alpha_2 w$ are points of period two, which generate a linear subspace of periodic points of period two.

Consider the potential
$$
A(x) := -[r(\,\|x - v\|_X\,) \|x - v\|_X + r(\,\|x - w\|_X\,)\|x - w\|_X] \;,
$$
where $r$ was defined above.

We have that $A \in \mathcal{H}_1(X)$, the  periodic orbit $\{v, w\}$ of period two is such that $1/2\, \delta_v + 1/2 \,\delta_w$, and it is the unique  maximizing probability measure for the potential $A$ and $m(A) = 0$.
\end{example}

\begin{example}
For fixed $c>1$, and either for $X = l^p(\mathbb{R})$ or $X = c_0(\mathbb{R})$, consider the operator $L: X \to X$ given by
$$
L((x_n)_{n\geq1}) := (c\, x_{n+1})_{n\geq1} \;.
$$

Denote $W \subset X$
the infinite-dimensional vector subspace  of the form
$$
W :=\{\ (x_1, 0, x_2, 0, ... , 0, x_n, 0, ... ) :\; x \in X\} \;.
$$

Define the potential
$$
A(x) := -r(d_X (x, W))  \,d_X(x, W) \;,
$$
where $r$ was defined above and $d_X(x, W) := \inf\{\|x - w\|_X :\; w \in W\}$. Then, $A \in \mathcal{H}_1(X)$ and any $\mu \in \mathcal{P}_L(X)$, with support contained in $W$, is maximizing for $A$. For instance, the probability measure $t\delta_v + (1-t)\delta_{L(v)}$, $t \in (0, 1)$, where
$$
v := (1, 0, c^{-2}, 0, c^{-4}, 0, ... , 0, c^{-2n}, 0, ... ) \;,
$$
is a maximizing measure for the potential $A$.
\end{example}

\end{document}